\newtheorem{prop}{Proposition}
\newtheorem{thm}{Theorem}
\newtheorem{Lemma}{Lemma}
\def\tr{\operatorname{tr}}
\def\Ad{\operatorname{Ad}}
\def\Gr{\operatorname{Gr}}
\begin{document}

\title{Monotonicity on homogeneous spaces
\thanks{This work was funded by the Engineering and Physical Sciences Research Council (EPSRC) of the United Kingdom, as well as the European Research Council under the Advanced ERC Grant Agreement Switchlet n.670645.}
}


\author{Cyrus Mostajeran        \and
        Rodolphe Sepulchre 
}


\institute{C. Mostajeran \at
              Department of Engineering, University of Cambridge, Cambridge, UK \\
              \email{csm54@cam.ac.uk}           
           \and
           R. Sepulchre \at
              \email{r.sepulchre@eng.cam.ac.uk}  
}

\date{December 20, 2018}

\maketitle

\begin{abstract}
This paper presents a formulation of the notion of monotonicity on homogeneous spaces. We review the general theory of invariant cone fields on homogeneous spaces and provide a list of examples involving spaces that arise in applications in information engineering and applied mathematics. Invariant cone fields associate a cone with the tangent space at each point in a way that is invariant with respect to the group actions that define the homogeneous space. We argue that invariance of conal structures induces orders that are tractable for use in analysis and propose invariant differential positivity as 
a natural generalization of monotonicity on such spaces.

\keywords{Monotone systems \and Homogeneous spaces \and Positivity \and Cone fields.}
\subclass{34C12 \and 37C65 \and 22F30 \and 06A75}
\end{abstract}

\section{Introduction}

Monotonicity is the property of dynamical systems or maps that preserve a partial order, which is defined as a binary relation that is reflexive, antisymmetric, and transitive. That is, a monotone dynamical system is characterized by the property that any two points that are ordered at one instant in time will remain ordered at all subsequent times as the system evolves with the flow. Monotone flows and their discrete-time analogues, order-preserving maps, play an important role in the theory of dynamical systems and find applications to many biological, physical, chemical, and economic models \cite{Luenberger1979,Farina2000}. These systems are closely related to linear dynamical systems with input and output channels, where the monotonicity of a nonnegative input is preserved by the output  \cite{Ohta1984,Anderson1996,Grussler2014,Altafini2016,Grussler2017}. Recently, this type of input-output preserving system has been further extended to the notion of unimodality \cite{Grussler2018}.

One area of application of monotonicity is the theory of consensus algorithms \cite{Moreau2004,Olfati-Saber2006,Olfati-Saber2007,Jadbabaie2003,Sepulchre2010,Sepulchre2011}, where one is interested in designing and analyzing consensus protocols that define the interactions between a collection of agents exchanging information about their relative states via a communication network with the aim of achieving collective behavior. Monotone systems also arise naturally in many areas of biology \cite{Enciso2005,Angeli2008,Angeli2012}. A classical example of a monotone system arising from biology is described by a Kolmogorov model of interacting species where an increase in any population causes an increase in the growth rate of all other populations. Such systems are said to be cooperative \cite{Smith2008}. Monotone systems also arise in areas of biology other than population dynamics. For instance, see \cite{Smith2003} for an example concerning the dynamics of viral infections. Furthermore, monotone subsystems are often found as components of larger networks due to their robust dynamical stability and predictability of responses to perturbations. The decomposition of networks into monotone subsystems and the study of their interconnections using tools from control theory have also proven to be insightful \cite{Angeli2003,Angeli2004,de2007}.

This paper addresses the question of how to define monotonicity on a homogeneous manifold. The notion of order plays a defining role in monotonicity theory. In linear spaces, it is well-known that orders are intimately connected with the theory of pointed solid convex cones. In this paper, a solid convex cone $\mathcal{K}$ is said to be pointed if $\mathcal{K}\cap-\mathcal{K}=\{0\}$. Every such cone $\mathcal{K}$ induces a partial order $\leq$ in a vector space, whereby $a\leq b$ if  $b-a$ lies in $\mathcal{K}$. The simplest example is provided by the positive orthant $\mathbb{R}^n_+$ in $\mathbb{R}^n$ consisting of vectors with nonnegative entries, which induces the standard vector order based on pairwise comparisons of vector entries.  It is natural to generalize this approach by defining a field of cones on a manifold, whereby a cone is associated with the tangent space at each point on the manifold. A \emph{conal curve} of a cone field is defined as a piecewise smooth curve whose tangent vector lies in the cone at every point along the curve wherever it exists. Cone fields induce the notion of conal orders, whereby a pair of points are said to be ordered if the first point can be joined to the second point by a conal curve. Conal orders locally define partial orders on a manifold. Whether the local partial order can be extended globally depends on the structure of the cone field and the underlying space. 

In most applications in applied mathematics and engineering, we are interested in problems that are formulated on spaces with special geometries such as homogeneous spaces. These are manifolds that admit a transitive Lie group action and thus provide a way of systematically generating mathematical structures over the tangent bundle using constructs defined at a single point. This provides a methodology to incorporate the symmetries of the space in any additional structures that are endowed to the space for analysis and design purposes. The classical example of such a construction is that of a homogeneous Riemannian metric, which is entirely determined by the metric at a point. In a similar spirit, we define and characterize invariant cone fields on homogeneous spaces. In doing so we closely review elements of the general theory of homogeneous cone fields as outlined in the important work by Hilgert et al. in \cite{Hilgert1989,Hilgert2006} and Neeb in \cite{Neeb1991}. We then present a number of examples of homogeneous spaces that arise in a variety of applications in information geometry, computational science and engineering, including Grassmann manifolds and spaces of symmetric positive definite matrices, and consider the existence of invariant cone fields on these spaces. A key theme of the paper is that geometric invariance yields `tractability' in the analysis of orders and related concepts, which otherwise may appear daunting. In particular, we show that conality of geodesics on globally orderable Riemannian homogeneous spaces can be used to determine order relations between points on such spaces.

Cone fields and conal curves provide a local or differential way of thinking about order relations, which can be viewed as corresponding global concepts. Monotonicity itself is a global concept in the sense that it is classically defined in relation to some partial order. In extending any concept defined on vector spaces to manifolds, it is natural to seek the differential characterization of the property, which in turn will often provide a route for generalization to the nonlinear manifold setting. The local property that is equivalent to monotonicity in $\mathbb{R}^n$ with respect to a partial order defined by a constant cone field is differential positivity \cite{Forni2015}. We propose \emph{invariant differential positivity} (i.e., differential positivity with respect to an invariant cone field) as a generalization of monotonicity to homogeneous spaces. We will show that invariant differential positivity is indeed equivalent to monotonicity when the cone field induces a global partial order. Furthermore, we discuss how the property remains useful in cases where the order is not a global partial order.
Invariant differential positivity can be a powerful analytic tool for the study of monotonicity in a variety of contexts, including the theory of consensus of oscillators \cite{Mostajeran2016,Mostajeran2017a}, nonlinear dynamical systems \cite{Forni2014a,Forni2015a,Forni2015}, and matrix monotone functions \cite{Lowner1934,Bhatia,Mostajeran2017b}. In this paper, we specify what we mean by invariant differential positivity on homogeneous spaces, including with respect to cones of rank $k$ \cite{sanchez2009,sanchez2010,Fusco1991,Mostajeran2017d}, which are generalizations of cones to structures that are closed and invariant under scaling by all real numbers. We also consider the strong implications that invariant differential positivity can have for the asymptotic behavior of dynamical systems.

\section{Homogeneous spaces} \label{2}

A \emph{left action} of a Lie group $G$ on a manifold $\mathcal{M}$ is a smooth map $\Phi:G\times \mathcal{M}\rightarrow \mathcal{M}$ satisfying $\Phi(e,x)=x$ and $\Phi(g_1g_2,x)=\Phi(g_1,\Phi(g_2,x))$ for all $g_1,g_2\in G$, $x\in \mathcal{M}$, where $e$ is the identity element in $G$.
Note that for a given $g\in G$, the map $x\mapsto\Phi(g,x)$ is a diffeomorphism of $\mathcal{M}$. The group $G$ is referred to as a transformation group of the manifold $\mathcal{M}$. We will use $\Phi(g,x)$ and $g\cdot x$ interchangeably in this paper.
A homogeneous space is defined as a manifold $\mathcal{M}$ on which a Lie group $G$ acts transitively.

\begin{definition}
A smooth manifold $\mathcal{M}$
is said to be a homogeneous space if there exists a Lie group $G$ acting on $\mathcal{M}$ such that for all $x_1,x_2\in \mathcal{M}$, there exists $g\in G$ such that $g\cdot x_1=x_2$.
\end{definition}

Homogeneous spaces are closely connected to \emph{coset manifolds}. For a given Lie group $G$ and a closed subgroup $H$, consider the set $G/H:=\{g H:g\in G\}$ of left cosets of $H$ in $G$. The set $G/H$ is the set of equivalence classes for the equivalence relation $\sim$ on $G$ defined by 
\begin{equation}
g_1\sim g_2 \quad \Longleftrightarrow \quad \exists \, h\in H :\, g_1= g_2 h.
\end{equation}
The set $G/H$ can be made into a manifold in a unique way if we require that the projection map $\pi:G\rightarrow G/H$, $\pi(g):=gH$ be a submersion; i.e., if we require that the differential map $d\pi\vert_g$ is surjective for each $g\in G$. For each $a\in G$, define the left translation $\tau_a:G/H\rightarrow G/H$ by $\tau_a(gH):=agH$. Note that the left translations $\tau_g$ are related to the left translations $L_g$ on the Lie group $G$ by
$\pi\circ L_g=\tau_g\circ \pi$, for each $g\in G$. The left translations $\tau_a$ define a transitive action on $G/H$ given by $\Phi(a,gH):=\tau_a(gH)=agH$. Thus, all coset manifolds of the form $G/H$ are homogeneous spaces. Indeed,  the converse is also true. That is, any homogeneous manifold $\mathcal{M}$ with a transitive group action $G\times\mathcal{M}\rightarrow\mathcal{M}$ can be expressed as a suitable coset manifold $G/H$. To see this, we first define the isotropy group $G_x$ at a point $x\in \mathcal{M}$ to be the set $G_x:=\{g\in G:g\cdot x =x\}$. That is, the isotropy group $G_x$ consists of all elements in the transformation group $G$ that keep $x$ fixed. Fix a point $o\in\mathcal{M}$ and note that $H:=G_o$ forms a closed subgroup of $G$. The natural map $\iota:G/H\rightarrow\mathcal{M}$ defined by $\iota(gH)=g\cdot o$ is a diffeomorphism, so that $\mathcal{M}\cong G/H$ as smooth manifolds. Furthermore, it can be shown that $\dim\mathcal{M}=\dim G - \dim H$ \cite{Arv2003}.

\subsection{Reductive homogeneous spaces}

Let $\mathcal{M}=G/H$ be a homogeneous space and consider the natural projection $\pi:G\rightarrow G/H$, $\pi(g)=gH$. The differential $d\pi\vert_e:\mathfrak{g}\rightarrow T_{o}(G/H)$, where $o=\pi(e)=eH$, is given by
\begin{equation} \label{diff proj}
d\pi\vert_e X=\frac{d}{dt}(\pi\circ\exp tX)\Big |_{t=0}=\frac{d}{dt}\left((\exp tX)H\right)\Big |_{t=0},
\end{equation}
for $X\in\mathfrak{g}$. As
the map $d\pi\vert_e:T_eG\rightarrow T_o\mathcal{M}$ is a vector space homomorphism, we have $T_eG/(\ker d\pi\vert_e)\cong \mathrm{im}\,d\pi\vert_e$. It follows from (\ref{diff proj}) that $\ker d\pi\vert_e=\mathfrak{h}$, where $\mathfrak{h}$ is the Lie algebra of $H$. Thus, we have the canonical isomorphism
\begin{equation}  \label{4.2}
\mathfrak{g/h}\cong T_o(G/H) = T_o\mathcal{M},
\end{equation}
where $\mathfrak{g/h}$ is the set of cosets $X+\mathfrak{h}=\{X+Y|Y\in\mathfrak{h}\}$ for $X\in\mathfrak{g}$.
\begin{definition}
A homogeneous space $\mathcal{M}=G/H$ is said to be \emph{reductive} if there exists a subspace $\mathfrak{m}$ of $\mathfrak{g}$ such that $\mathfrak{g}=\mathfrak{h\oplus m}$ and 
$
\mathrm{Ad}(h)\mathfrak{m}\subseteq\mathfrak{m}$,  for all $h\in H$.
 \end{definition}
The $\mathrm{Ad}(H)$-invariance condition $\mathrm{Ad}(h)\mathfrak{m}\subseteq\mathfrak{m}$ implies $[\mathfrak{h},\mathfrak{m}]\subseteq\mathfrak{m}$. For a reductive homogeneous space $G/H$, the canonical isomorphism (\ref{4.2}) reduces to
$
\mathfrak{m}\cong T_o(G/H)$.
Note that if the Lie group $G$ is compact, then the homogeneous space $\mathcal{M}=G/H$ is reductive since $\mathfrak{g=h\oplus m}$, where $\mathfrak{m}:=\mathfrak{h}^{\perp}$ with respect to an $\mathrm{Ad}$-invariant inner product on $\mathfrak{g}$. Moreover, we note that the Killing form $B:\mathfrak{g\times g}\rightarrow\mathbb{R}$, 
$
B(X,Y)=\mathrm{tr}(\mathrm{ad}X\circ\mathrm{ad}Y)
$
 of a Lie group $G$ is always an $\mathrm{Ad}$-invariant symmetric bilinear form on $\mathfrak{g}$. Thus, if $G$ is compact and semisimple so that $-B$ is positive definite, then the Killing form defines a bi-invariant metric on $G$ given by $\langle \cdot,\cdot\rangle=-B(\cdot,\cdot)$ \cite{Arv2003}.

\subsection{Symmetric spaces}

Symmetric spaces constitute an important class of homogeneous spaces that includes many of the spaces that are of interest in applications and discussed in this paper.
A connected Riemannian manifold $\mathcal{M}$ is said to be a \emph{symmetric space} if for each $p\in\mathcal{M}$, there exists an isometry $j_p:\mathcal{M}\rightarrow \mathcal{M}$, such that 
\begin{equation}
j_p(p)=p \quad \mathrm{and} \quad dj_p\vert_p=-\mathrm{Id}_p,
\end{equation}
where $\mathrm{Id}_p$ is the identity map on $T_p\mathcal{M}$.
The map $j_p$ has the property that it ``reverses" the geodesics that pass through $p\in\mathcal{M}$, in the sense that if $\gamma_v:(-\epsilon,\epsilon)\rightarrow\mathcal{M}$ is the unique geodesic through $p$ with
$\gamma_v(0)=p$ and $\gamma'_v(0)=v$, then $j_p(\gamma_v(t))=\gamma_v(-t)$. The Euclidean space $\mathbb{R}^n$ is clearly symmetric. A less trivial example is the $n$-sphere $\mathbb{S}^n$ embedded in $\mathbb{R}^{n+1}$, where the symmetry at the north pole $p=(1,0,\cdot\cdot\cdot,0)$ is given by $j_p(x_1,x_2,\ldots,x_{n+1})=(x_1,-x_2,\ldots,-x_{n+1})$. 
A symmetric Riemannian manifold $\mathcal{M}$ is a homogeneous space $G/H$, where $G=I(\mathcal{M})$ is the isometry group of $\mathcal{M}$ acting transitively on $\mathcal{M}$ and $H$ is the isotropy subgroup of a point $o\in\mathcal{M}$.

Denote the symmetry of the symmetric space $\mathcal{M}=G/H$ at $o=eH$ by $j$. Now for each $g\in G$, define the map $\sigma(g):\mathcal{M}\rightarrow\mathcal{M}$ by $\sigma(g)=j\circ g\circ j$. Since $\sigma(g)$ is an isometry of $\mathcal{M}$, it lies in $G$ and thus we can define an automorphism $\sigma:G\rightarrow G$ by $g\mapsto \sigma(g)=j\circ g\circ j^{-1}$ as $j^2=\mathrm{Id}$. Setting $G_\sigma=\{g\in G:\sigma(g)=g\}$ to be the fixed points of $\sigma$ and $G_{\sigma}^o$ its connected component, one can show that $\sigma^2=\mathrm{Id}_G$ and $G_{\sigma}$ is a closed subgroup of $G$ that satisfies $G_{\sigma}^o\subseteq H \subseteq G_{\sigma}$. The map $\sigma$ is sometimes referred to as the \emph{involution} map associated with the symmetric space $G/H$. Every symmetric space $G/H$ with involution $\sigma$ is a reductive homogeneous space with reductive decomposition $\mathfrak{g}=\mathfrak{h\oplus m}$, where
\begin{equation}
\mathfrak{h}=\{X\in\mathfrak{g}:d\sigma\vert_e X=X\} \quad \mathrm{and} \quad \mathfrak{m}=\{X\in\mathfrak{g}:d\sigma\vert_e X=-X\},
\end{equation}
and $\mathrm{Ad}(H)\mathfrak{m}\subseteq\mathfrak{m}$ \cite{Arv2003}.

\section{Invariant cone fields on homogeneous spaces}  \label{3}

\subsection{Homogeneous cone fields}

A wedge is a closed and convex subset of a vector space that is closed under multiplication by nonnegative scalars \cite{Hilgert1989}.
A wedge field  $W_{\mathcal{M}}$ on a manifold $\mathcal{M}$  smoothly assigns to each point $x\in\mathcal{M}$ a wedge $W_{\mathcal{M}}(x)$ in the tangent space $T_x\mathcal{M}$. 

\begin{definition} \label{def 6}
Let $\Phi:G\times\mathcal{M}\rightarrow\mathcal{M}$ be any left group action on $\mathcal{M}$ such that each of the maps $\tau_g:\mathcal{M}\rightarrow\mathcal{M}$ defined by $\tau_g(x):=\Phi(g,x)=g\cdot x$ forms a diffeomorphism of $\mathcal{M}$. Then a wedge field $W_{\mathcal{M}}$ is said to be $G$-\emph{invariant} if
\begin{equation} \label{6.4}
d\tau_g\big\vert_x\left(W_{\mathcal{M}}(x)\right) = W_{\mathcal{M}}\left(g\cdot x\right),
\end{equation}
for all $g\in G$ and $x\in \mathcal{M}$.
\end{definition}

We now specialize to the case where the group action is transitive so that $\mathcal{M}$ is a homogeneous space.
A \emph{homogeneous cone field} on a homogeneous space $\mathcal{M}=G/H$ of a connected Lie group $G$ assigns to each point $x\in\mathcal{M}$ a cone $\mathcal{K}_{\mathcal{M}}(x)$ in the tangent space $T_x\mathcal{M}$, such that the cone field is invariant under the action of $G$ on $\mathcal{M}$. Recall that elements of $\mathcal{M}$ can be identified with cosets $gH:=\{gh:h\in H\}$ in $G/H$ and let $o:=eH$ denote the base-point in $\mathcal{M}$. The left translations $\tau_g:\mathcal{M}\rightarrow\mathcal{M}$ are defined by $\tau_g(x)=g\cdot x$ for all $g\in G$ and $x\in\mathcal{M}$. Let $\mathfrak{g}=T_eG$ and $\mathfrak{h}=T_eH$ denote the Lie algebras of $G$ and $H$, respectively. 
The canonical projection $\pi:G\rightarrow\mathcal{M}$ defined by $\pi(g)=gH$ induces a linear surjection $d\pi\vert_e:\mathfrak{g}\rightarrow T_o\mathcal{M}$ with $\ker d\pi\vert_e=\mathfrak{h}$, so that we obtain the isomorphism $\mathfrak{g/h}\cong T_o\mathcal{M}$ given by
\begin{equation}
X+\mathfrak{h}\mapsto d\pi\vert_e X.
\end{equation}
Thus, the surjection $d\pi\vert_e$ is identified with the quotient map $p:\mathfrak{g\rightarrow g/h}$, where $p(X)=X+\mathfrak{h}$. 

Recall that $H$ is the isotropy subgroup of $G$ acting on $\mathcal{M}$ at $o$. That is, for each $h\in H$, we have $\tau_h(o)=o$. Thus, we obtain a vector space isomorphism $d\tau_h\vert_o:T_o\mathcal{M}\rightarrow T_o\mathcal{M}$ and a representation $\eta:H\rightarrow\mathrm{Aut}\left(T_o\mathcal{M}\right)$ of $H$ given by
$
\eta (h)=d\tau_h\big\vert_o$.
Under the identification of $T_o\mathcal{M}$ with $\mathfrak{g/h}$, we have:
\begin{equation}
\eta(h)\left(X+\mathfrak{h}\right)=\mathrm{Ad}(h)\left(X\right)+\mathfrak{h},
\end{equation}
for all $h\in H$ and $X\in\mathfrak{g}$. 

If we seek to describe invariant cone fields on $\mathcal{M}=G/H$ in terms of wedges defined in the Lie algebra $\mathfrak{g}$ of the total space $G$, then there are some consistency requirements that must be satisfied. In particular, any cone $\mathcal{K}$ in $T_o\mathcal{M}=\mathfrak{g/h}$ arising as the projection of a wedge $W$ in $\mathfrak{g}$ must be invariant under the group $\eta(H)$, since otherwise 
we would have different cones at
$o$ depending on the choice of representative $h\in H$ in $o=eH=\pi(h)$. 

\begin{Lemma}
Let $H$ be a closed subgroup of $G$ and $W$ a wedge in $\mathfrak{g}$ with edge $W\cap-W=\mathfrak{h}$. If
\begin{equation}
\mathrm{Ad}(h)\left(W\right)=W, \quad \forall h\in H,
\end{equation}
then the associated pointed cone $\mathcal{K}=p(W)$ in $T_o\mathcal{M}=\mathfrak{g/h}$ is invariant under the group $\eta(H)$.
\end{Lemma}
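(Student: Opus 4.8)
The plan is to verify, in turn, that $\mathcal{K}=p(W)$ is a genuine pointed cone in $\mathfrak{g/h}$, and then that it is fixed by every $\eta(h)$, $h\in H$. The invariance is in fact the easy half: since the excerpt gives the explicit formula $\eta(h)(X+\mathfrak{h})=\mathrm{Ad}(h)(X)+\mathfrak{h}$ for all $X\in\mathfrak{g}$, one has $\eta(h)(p(W))=p(\mathrm{Ad}(h)W)=p(W)$, where the last equality is exactly the hypothesis $\mathrm{Ad}(h)W=W$. So the real work is to justify calling $p(W)$ a pointed cone.

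First I would record the elementary but crucial observation that, because $\mathfrak{h}=W\cap(-W)\subseteq W$ and a wedge is closed under addition (for $a,b\in W$, convexity gives $\tfrac12(a+b)\in W$ and then scaling by $2$ gives $a+b\in W$), we have $W+\mathfrak{h}=W$, i.e.\ $W=p^{-1}(p(W))$. Linearity of $p$ then immediately makes $p(W)$ convex and stable under multiplication by nonnegative scalars, and since $p:\mathfrak{g}\to\mathfrak{g/h}$ is the quotient map — for which a subset of $\mathfrak{g/h}$ is closed precisely when its preimage in $\mathfrak{g}$ is closed — the closedness of $W$ yields closedness of $p(W)$. This is the one place where $\mathfrak{h}\subseteq W$ is essential, and I expect it to be the step most easily glossed over, since linear images of closed sets are not closed in general.

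Next I would establish pointedness, $\mathcal{K}\cap(-\mathcal{K})=\{0\}$ in $\mathfrak{g/h}$. Take $v=p(X)=-p(Y)$ with $X,Y\in W$. Then $p(X+Y)=0$, so $X+Y\in\mathfrak{h}=W\cap(-W)$, hence $-(X+Y)\in W$; adding $Y\in W$ gives $-X\in W$, so $X\in W\cap(-W)=\mathfrak{h}$ and therefore $v=p(X)=0$. Here the edge hypothesis $W\cap(-W)=\mathfrak{h}$ is doing all the work.

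Assembling the three facts — $p(W)$ is closed, convex, and scale-invariant; it is pointed by the edge condition; and it is $\eta(H)$-invariant by the $\mathrm{Ad}(H)$-invariance of $W$ together with $\eta(h)(X+\mathfrak{h})=\mathrm{Ad}(h)(X)+\mathfrak{h}$ — completes the argument. No deep input is needed; the only subtlety warranting care is the closedness of $p(W)$, which rests entirely on the inclusion $\mathfrak{h}\subseteq W$ furnished by the edge hypothesis.
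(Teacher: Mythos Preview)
Your proof is correct. The paper states this lemma without proof, so there is no argument to compare against; your write-up in fact supplies more than the paper does. The intertwining relation $\eta(h)\circ p = p\circ\mathrm{Ad}(h)$ immediately gives $\eta(H)$-invariance of $p(W)$, and your verification that $p(W)$ is a genuine pointed cone --- with the key observation $W+\mathfrak{h}=W$ ensuring $p^{-1}(p(W))=W$ and hence closedness of the image --- is sound and correctly identifies the only step requiring care.
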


Note that for a cone $\mathcal{K}$ in $T_o\mathcal{M}$ that is invariant under $\eta(H)$, the $G$-invariant cone field $\mathcal{K}_{\mathcal{M}}$ given by
\begin{equation}
\mathcal{K}_{\mathcal{M}}\left(gH\right):=d\tau_g\big\vert_o\mathcal{K},
\end{equation}
is well-defined. That is, for all $g,g'\in G$ corresponding to the same point $x\in\mathcal{M}$ (i.e. for all $g,g'\in G$ satisfying $\pi(g)=\pi(g')$), we have
\begin{equation}
d\tau_g\big\vert_o \mathcal{K} =d\tau_{g'}\big\vert_o\mathcal{K}.
\end{equation}
To see this, note that $\pi(g)=\pi(g')$ precisely if there exists $h\in H$ such that $g'=gh$. Thus, we have
$
d\tau_{g'}\vert_o=d\tau_g\vert_o\circ d\tau_h\vert_o=d\tau_g\vert_o\circ\eta(h)$,
 whence the result follows from the $\eta(H)$-invariance of $\mathcal{K}$. The following theorem from \cite{Hilgert1989} describes the geometry of homogeneous cone fields on $\mathcal{M}=G/H$. See figure \ref{homogeneous cone field}.
 
 \begin{figure} 
\centering
\includegraphics[width=0.55\linewidth]{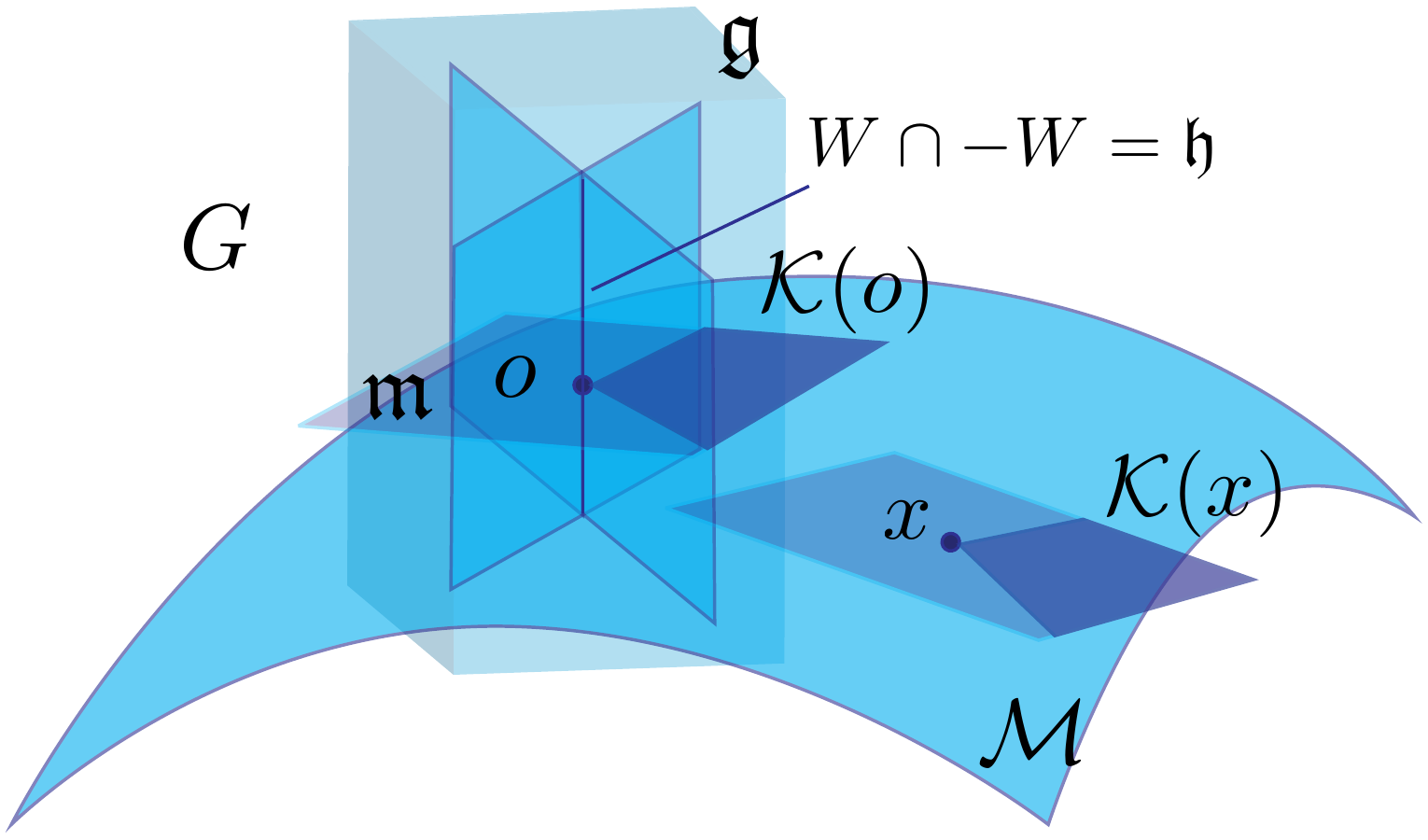}
  \caption{A homogeneous cone field $\mathcal{K}_{\mathcal{M}}$ on $\mathcal{M}=G/H$ arising as the projection of an invariant wedge field $W_G$ on $G$ generated by an $\Ad_{H}$-invariant wedge $W\subset \mathfrak{g}$ that satisfies $W\cap -W=\mathfrak{h}$.
  }
  \label{homogeneous cone field}
\end{figure}

\begin{thm}  \label{cone thm}
Let $H$ be a closed subgroup of a Lie group $G$ and $W$ a wedge in $\mathfrak{g}$ such that
(i) $W\cap-W=\mathfrak{h}$, and
(ii) $\mathrm{Ad}(H)(W)=W$.
Define 
$W_{G}$ and $\mathcal{K}_{\mathcal{M}}$ by
\begin{equation}
W_G(g):=dL_g\big\vert_e\,W, \quad
\mathcal{K}_{\mathcal{M}}(x):=d\tau_g\big\vert_o\,\mathcal{K},
\end{equation}
where $\mathcal{M}:=G/H$, $e$ is the identity element in $G$, $o=eH$ is the base-point in $\mathcal{M}$, and
$\mathcal{K}$ is the pointed cone in $T_o\mathcal{M}$ obtained as the projection of $W$ onto $\mathfrak{g/h}$.
Then, $\mathcal{K}_G$ is an invariant wedge field on $G$ and $\mathcal{K}_{\mathcal{M}}$ is a well-defined homogeneous or $G$-invariant cone field on $\mathcal{M}$. Moreover, for each $g\in G$,
\begin{equation}
d\pi\big\vert_g\left(W_G\right)=\mathcal{K}_{\mathcal{M}}\left(\pi(g)\right),
\end{equation}
where $\pi:G\rightarrow \mathcal{M}$ is the canonical projection $\pi(g)=gH$. 
\end{thm}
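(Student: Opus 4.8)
The plan is to verify the three assertions of the theorem in sequence, each being a short consequence of the chain rule together with two structural facts already on hand: the intertwining relation $\pi\circ L_g=\tau_g\circ\pi$ recorded in Section~\ref{2}, and the preceding Lemma, which states that hypotheses (i)--(ii) force the projected cone $\mathcal{K}=p(W)$ to be invariant under the isotropy representation $\eta(H)$. For left-invariance of $W_G$: for $a,g\in G$, differentiating $L_a\circ L_g=L_{ag}$ at $e$ gives $dL_a|_g\circ dL_g|_e=dL_{ag}|_e$, so applying both sides to $W$ yields $dL_a|_g\big(W_G(g)\big)=W_G(ag)$, and smoothness of $g\mapsto W_G(g)$ is inherited from smoothness of left translation. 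Hence $W_G$ is an invariant wedge field on $G$.

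For well-definedness and $G$-invariance of $\mathcal{K}_{\mathcal{M}}$, I would first record that $\mathcal{K}:=p(W)=d\pi|_e(W)$ is a genuine pointed closed convex cone in $\mathfrak{g}/\mathfrak{h}\cong T_o\mathcal{M}$: convexity and the cone property pass through the linear surjection $p$; closedness holds because (i) gives $\mathfrak{h}\subseteq W$, so $p^{-1}(p(W))=W+\mathfrak{h}=W$ is closed while $p$ is open; and pointedness follows from $W\cap-W=\mathfrak{h}=\ker p$ together with additivity of the wedge (if $p(w_1)=-p(w_2)$ with $w_i\in W$, then $w_1+w_2\in\mathfrak{h}\subseteq-W$, so $-w_1=(-w_1-w_2)+w_2\in W$, forcing $w_1\in\mathfrak{h}$). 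The Lemma then gives $\eta(h)\mathcal{K}=\mathcal{K}$ for all $h\in H$. Now if $g'=gh$ with $h\in H$, then $\tau_{g'}=\tau_g\circ\tau_h$ on $\mathcal{M}$ and $\tau_h(o)=o$, hence $d\tau_{g'}|_o=d\tau_g|_o\circ\eta(h)$; combined with the $\eta(H)$-invariance of $\mathcal{K}$ this shows $d\tau_{g'}|_o\,\mathcal{K}=d\tau_g|_o\,\mathcal{K}$, so $\mathcal{K}_{\mathcal{M}}(gH):=d\tau_g|_o\,\mathcal{K}$ does not depend on the chosen representative. $G$-invariance in the sense of Definition~\ref{def 6} is then immediate from $\tau_a\circ\tau_g=\tau_{ag}$, and smoothness of $\mathcal{K}_{\mathcal{M}}$ follows either from local smooth sections of the submersion $\pi$ or from the projection identity below.

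For the projection identity, I would differentiate $\pi\circ L_g=\tau_g\circ\pi$ at $e$ to obtain $d\pi|_g\circ dL_g|_e=d\tau_g|_o\circ d\pi|_e$. Applying both sides to the wedge $W$ and using $dL_g|_e W=W_G(g)$ on the left and $d\pi|_e W=p(W)=\mathcal{K}$ on the right gives $d\pi|_g\big(W_G(g)\big)=d\tau_g|_o\,\mathcal{K}=\mathcal{K}_{\mathcal{M}}(gH)$, which is exactly the asserted equality (note this is an equality of subsets, so $d\pi|_g$ maps $W_G(g)$ \emph{onto}, not merely into, $\mathcal{K}_{\mathcal{M}}(\pi(g))$).

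I do not expect a serious obstacle: the content of the theorem is essentially bookkeeping organized around the Lemma. The two points that deserve a moment's care are the closedness of $p(W)$ --- which is precisely why hypothesis (i) is stated as the equality $W\cap-W=\mathfrak{h}$ rather than the mere inclusion $\mathfrak{h}\subseteq W$, since this identifies $\mathfrak{h}$ with the edge of $W$ and makes $W+\mathfrak{h}=W$ --- and the observation that representative-independence of $\mathcal{K}_{\mathcal{M}}$ genuinely invokes both (i), so that $\mathcal{K}$ is a pointed cone with edge exactly $\mathfrak{h}$, and (ii), so that $\mathcal{K}$ is $\eta(H)$-invariant.
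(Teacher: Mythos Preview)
Your proposal is correct and follows precisely the route the paper itself lays out: the paper does not supply a formal proof of this theorem (it is attributed to Hilgert et al.), but the discussion immediately preceding the statement provides exactly the ingredients you use---the Lemma giving $\eta(H)$-invariance of $\mathcal{K}$, the representative-independence computation $d\tau_{g'}|_o=d\tau_g|_o\circ\eta(h)$, and the intertwining relation $\pi\circ L_g=\tau_g\circ\pi$ from Section~\ref{2}. Your treatment of closedness and pointedness of $p(W)$ is a welcome addition of detail that the paper leaves implicit.
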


\subsection{Examples} \label{examples}

\subsubsection{Lie groups}

Any Lie group $G$ is itself a homogeneous space in at least two ways. First, it can be expressed as $G\cong G/\{e\}$. Alternatively, one can write $G\cong G\times G/G$, where $G\times G$ acts on $G$ by left and right translations and the isotropy subgroup is $G$ diagonally embedded in $G\times G$. Invariant cone fields can be defined on a Lie group as a homogeneous space using left-translation. Given a cone $\mathcal{K}$ in $\mathfrak{g}$, the corresponding left-invariant cone field $\mathcal{K}_G$ is given by
\begin{equation}
\mathcal{K}_G(g)=dL_g\big\vert_e\mathcal{K}, 
\end{equation}
for all $g \in G$.

\subsubsection{The quotient of the Heisenberg group by its center, $G/Z(G)$}

The Heisenberg group $G$ is a Lie group that arises in various fields including representation theory, sub-Riemannian geometry and quantum mechanics.  It can be defined as the group of $3\times 3$ upper triangular matrices with diagonal elements equal to 1 and group operation given by matrix multiplication. The Lie algebra $\mathfrak{g}$ can be represented as the set of stricly upper triangular $3\times 3$ matrices. That is,
\begin{equation}
G=\Bigg\{\begin{pmatrix} 1 & a & c \\ 0 & 1 & b \\ 0 & 0& 1 \end{pmatrix}: a,b,c\in\mathbb{R}\Bigg\}, \quad
\mathfrak{g}=\Bigg\{\begin{pmatrix} 0 & \alpha & \gamma \\ 0 & 0 & \beta \\ 0 & 0& 0 \end{pmatrix}: \alpha,\beta,\gamma\in\mathbb{R}\Bigg\}.
\end{equation}
The center $Z(G)$ of $G$ is defined as the set $Z(G)=\{z\in G: zg=gz, \; \forall g\in G\}$ and forms a subgroup $H$ of $G$ with Lie algebra $\mathfrak{h}$ given by
\begin{equation}
H:=Z(G)=\Bigg\{\begin{pmatrix} 1 & 0 & c \\ 0 & 1 & 0 \\ 0 & 0& 1 \end{pmatrix}: c\in\mathbb{R}\Bigg\}, \quad \mathfrak{h}=\Bigg\{\begin{pmatrix} 0 & 0 & \gamma \\ 0 & 0 & 0 \\ 0 & 0& 0 \end{pmatrix}: \gamma\in\mathbb{R}\Bigg\}.
\end{equation}
The quotient manifold $G/H$ defines a homogeneous space of dimension 2. To construct an invariant cone field on $G/H$, we first look for a wedge $W$ in $\mathfrak{g}$ that satisfies condition $(i)$ of Theorem \ref{cone thm}; i.e., $W\cap-W=\mathfrak{h}$. This is achieved precisely if $W$ is of the form
\begin{equation}  \label{Heisenberg wedge}
W=\{(\alpha,\beta,\gamma): (\alpha,\beta)\in\mathcal{K}\subset\mathbb{R}^2, \gamma\in\mathbb{R}\},
\end{equation}
where $\mathcal{K}$ is any pointed convex solid cone in $\mathbb{R}^2$. For condition $(ii)$ of Theorem \ref{cone thm}, we consider $\Ad(H)W$. Now since
\begin{equation}
\begin{pmatrix} 1 & 0 & c \\ 0 & 1 & 0 \\ 0 & 0& 1 \end{pmatrix}\begin{pmatrix} 0 & \alpha & \gamma \\ 0 & 0 & \beta \\ 0 & 0& 0 \end{pmatrix}\begin{pmatrix} 1 & 0 & c \\ 0 & 1 & 0 \\ 0 & 0& 1 \end{pmatrix}^{-1}=\begin{pmatrix} 0 & \alpha & \gamma \\ 0 & 0 & \beta \\ 0 & 0& 0 \end{pmatrix},
\end{equation}
for all $c\in\mathbb{R}$ and $(\alpha,\beta,\gamma)\in\mathbb{R}^3$, $\Ad(H)(W)=W$ trivially holds for any wedge $W$ in $\mathfrak{g}$. Therefore, any wedge $W$ of the form (\ref{Heisenberg wedge}) uniquely defines an invariant cone field $\mathcal{K}_{\mathcal{M}}$ on $\mathcal{M}=G/H$.

\subsubsection{The $n$-spheres and Grassmannians}

The $n$-sphere $\mathbb{S}^n$ can be viewed as a homogeneous space 
with a transitive $SO(n+1)$ action, since any two points on $\mathbb{S}^n$ embedded in $\mathbb{R}^{n+1}$ are related by a rotation.
We fix the point $o=(1,0,\ldots, 0)\in\mathbb{S}^n$ and note that the isotropy subgroup of $o$ can be identified with $SO(n)$ since it consists of matrices in $SO(n+1)$ of the form 
\begin{equation}
\begin{pmatrix}
1 & 0 \\
0 & R
\end{pmatrix}
\end{equation}
were $R\in SO(n)$. Therefore, we can write $\mathbb{S}^n=SO(n+1)/SO(n)$. It is not possible to define homogeneous cone fields on every $n$-sphere. A direct way of proving this is to note that 
for all
even-dimensional spheres $\mathbb{S}^{2m}$, $m\in\mathbb{N}$ no global cone fields exist. 
This is a clear consequence of the Poincare-Brouwer theorem of algebraic topology, also known as the so-called hairy ball theorem, that the even-dimensional spheres do not admit any globally defined continuous and non-vanishing vector fields. Since a globally defined cone field on a manifold can be used to construct a continuous non-vanishing vector field by a continuous deformation of the cone field to a field of rays, the Poincare-Brouwer theorem implies the non-existence of global cone fields on  $\mathbb{S}^{2m}$.

The set of all $p$-dimensional subspaces of $\mathbb{R}^n$ is called the \emph{Grassmannian} of dimension $p$ in $\mathbb{R}^n$ and is denoted by $\mathrm{Gr}(p,n)$ \cite{Lee2003, Absil2008}. Grassmannians naturally arise in many applications including as parameter spaces in model estimation problems \cite{Smith2005} and in computer vision applications including affine-invariant shape analysis, image matching, and learning theory \cite{Goodall1999,Turaga2008}.
The set $\mathrm{Gr}(p,n)$ can be endowed with a natural differentiable structure that turns it into a compact manifold of dimension $p(n-p)$.
The Grassmann manifold $\mathrm{Gr}(p,n)$ is a homogeneous space with a natural transitive $O(n)$ action \cite{Edelman1999,Besse1987}:
\begin{equation}
\mathrm{Gr}(p,n)=O(n)/\left(O(p)\times O(n-p)\right).
\end{equation}
The Killing form $B:\mathfrak{g\times g}\rightarrow\mathbb{R}$ is non-degenerate for $\mathfrak{g}=\mathfrak{so}(n)$. Thus, $\mathrm{Gr}(p,n)=G/H=O(n)/\left(O(p)\times O(n-p)\right)$ is a reductive homogeneous space with reductive decomposition $\mathfrak{g}=\mathfrak{h}\oplus\mathfrak{m}$, where 
\begin{equation}
\mathfrak{g}=\mathfrak{so}(n), \quad \mathfrak{h}=\bigg\{
\begin{pmatrix}
X_1 & 0 \\
0 &  X_2
\end{pmatrix}:X_1\in \mathfrak{so}(p), \; X_2\in \mathfrak{so}(n-p)\bigg\},
\end{equation}
and $\mathfrak{m}=\mathfrak{h}^{\perp}$ with respect to the Killing form $B$ of $\mathfrak{so}(n)$. That is,
\begin{equation}
\mathfrak{m}=\bigg\{
\begin{pmatrix}
0 & -Z^T \\
Z &  0
\end{pmatrix}:Z\in \mathbb{R}^{(n-p)\times p}\bigg\}.
\end{equation}

As in the case of the $n$-spheres, the Poincare-Hopf theorem can be used to rule out the existence of homogeneous cone fields for most Grassmannians.
Indeed, it can be shown using Schubert calculus \cite{Kleiman1972} that unless $p$ is odd and $n$ is even, the real Grassmannian $\Gr(p,n)$ has a nonzero Euler characteristic and hence does not admit a continuous globally defined cone field as a corollary of the Poincare-Hopf theorem. In particular, $\Gr(p,n)$ does not in general admit a homogeneous cone field. 

\subsubsection{The space of positive definite matrices $S^+_n$}

The space of positive definite matrices $S^+_n$ of dimension $n$ arises in many applications in information geometry and computational science. It is well known that $S^+_n$ is a homogeneous space with a transitive $GL(n)$-action 
given by congruence transformations of the form
\begin{equation}
\tau_A: \Sigma \mapsto A\Sigma A^T \quad \forall A\in GL(n), \ \forall \Sigma \in S^+_n.
\end{equation}
The isotropy group of this action at $\Sigma = I$ is precisely $O(n)$, since $\tau_Q: I \mapsto QIQ^T=I$ if and only if $Q\in O(n)$. Thus, we can identify any $\Sigma\in S^+_n$ with an element of the quotient space $GL(n)/O(n)$. That is
\begin{equation}  \label{quotient}
S^+_n\cong GL(n)/O(n).
\end{equation}

The Lie algebra $\mathfrak{gl}(n)$ of $GL(n)$ consists of the set $\mathbb{R}^{n\times n}$ of all real $n\times n$ matrices
equipped with the Lie bracket $[X,Y]=XY-YX$, while the Lie algebra of $O(n)$ is $\mathfrak{o}(n)=\{X\in \mathbb{R}^{n\times n}: X^T=-X\}$. Since any matrix $X\in \mathbb{R}^{n\times n}$ has a unique decomposition 
$
X=\frac{1}{2}(X-X^T) + \frac{1}{2}(X+X^T)$,
as a sum of an antisymmetric part and a symmetric part, we have $\mathfrak{gl}(n)=\mathfrak{o}(n)\oplus\mathfrak{m}$, where $\mathfrak{m}=\{X\in\mathbb{R}^{n\times n}: X^T=X\}$. Furthermore, since $\operatorname{Ad}_Q(S)=QSQ^{-1}=QSQ^T$ is a symmetric matrix for each $S\in\mathfrak{m}$, we have
$
\operatorname{Ad}_{O(n)}(\mathfrak{m})=\{QSQ^{-1}:Q\in O(n), \ S\in\mathfrak{m}\}\subseteq\mathfrak{m}$.
Hence, $S^+_n=GL(n)/O(n)$ is in fact a reductive homogeneous space with reductive decomposition $\mathfrak{gl}(n)=\mathfrak{o}(n)\oplus\mathfrak{m}$. 
The tangent space $T_oS^+_n$ of $S^+_n$ at the base-point $o=[I]=I\cdot O(n)$ is identified with $\mathfrak{m}$. For each $\Sigma \in S^+_n$, the action $\tau_{\Sigma^{1/2}}:S^+_n\to S^+_n$ induces the vector space isomorphism $d\tau_{\Sigma^{1/2}}\vert_I: T_I S^+_n \to T_{\Sigma} S^+_n$ given by
$d\tau_{\Sigma^{1/2}}\vert_I X = \Sigma^{1/2}X\Sigma^{1/2}$ for each $X\in \mathfrak{m}$, where $\Sigma^{1/2}$ is the unique positive definite square root of $\Sigma$.

A cone field $\mathcal{K}$ on $S^+_n$ is affine-invariant or homogeneous with respect to the quotient geometry $S^+_n\cong GL(n)/O(n)$ if 
\begin{equation}
\left(d\tau_{\Sigma_2^{1/2}\Sigma_1^{-1/2}}\big\vert_{\Sigma_1}\right)\mathcal{K}(\Sigma_1)=\mathcal{K}(\Sigma_2),
\end{equation}
for all $\Sigma_1,\Sigma_2\in S^+_n$. To generate such a cone field, we require a cone $\mathcal{K}(I)\subset\mathfrak{m}$ at identity that is $\Ad_{O(n)}$-invariant: 
\begin{equation}   \label{Ad}
X \in \mathcal{K}(I) \Longleftrightarrow  \operatorname{Ad}_Q X = d\tau_Q\big\vert_I X = QXQ^T \in \mathcal{K}(I), \quad \forall Q\in O(n).
\end{equation}
Using such a cone, we uniquely generate a homogeneous cone field via 
\begin{equation}
\mathcal{K}(\Sigma)= d\tau_{\Sigma^{1/2}}\big\vert_I\mathcal{K}(I) =
\{X\in T_{\Sigma}S^+_n: \Sigma^{-1/2}X\Sigma^{-1/2}\in \mathcal{K}(I)\}.
\end{equation}
The $\operatorname{Ad}_{O(n)}$-invariance condition (\ref{Ad}) is satisfied if $\mathcal{K}(I)$ has a spectral characterization. That is, the characterization of $X\in\mathcal{K}(I)$ must only depend on the spectrum of $X$. For instance, $\operatorname{tr}(X)$ and $\operatorname{tr}(X^2)$ are spectral quantities because $\tr(X)$ is the sum of the eigenvalues of $X$ and $\tr(X^2)$ is the sum of the squares of the eigenvalues of $X$. Therefore, these quantities are $\Ad_{O(n)}$-invariant.
The following result gives a family of quadratic $\Ad_{O(n)}$-invariant cones in $\mathfrak{m}$, each of which generates a distinct homogeneous cone field on $S^+_n$ \cite{Mostajeran2017b}.

\begin{prop} 
For any choice of parameter $\mu \in (0,n)$, the set
\begin{equation}  \label{quad}
\mathcal{K}(I)=\{X\in T_IS^+_n:(\operatorname{tr}(X))^2-\mu \operatorname{tr}(X^2) \geq 0, \ \operatorname{tr}(X) \geq 0\},
\end{equation}
defines an $\operatorname{Ad}_{O(n)}$-invariant cone in $T_{I}S^+_n=\{X\in\mathbb{R}^{n\times n}: X^T=X\}$.
\end{prop}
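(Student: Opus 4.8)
The plan is to verify, in order, that $\mathcal{K}(I)$ is (a) invariant under $\mathrm{Ad}_{O(n)}$, (b) a cone (closed under multiplication by nonnegative scalars) and topologically closed, (c) convex, and (d) pointed and solid. Parts (a), (b) are essentially immediate. For (a), given $Q\in O(n)$ the map $X\mapsto \mathrm{Ad}_Q X = QXQ^T = QXQ^{-1}$ is a similarity transformation, so it preserves the spectrum of $X$, hence the quantities $\tr(X)$ and $\tr(X^2)$; moreover $QXQ^T$ is symmetric whenever $X$ is. Since the two inequalities defining $\mathcal{K}(I)$ depend on $X$ only through $\tr(X)$ and $\tr(X^2)$, the set is carried onto itself by every $\mathrm{Ad}_Q$, which is exactly the spectral-characterization remark made just before the proposition. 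For (b), the functions $X\mapsto (\tr X)^2-\mu\tr(X^2)$ and $X\mapsto \tr X$ are continuous and homogeneous of degrees $2$ and $1$, so $\mathcal{K}(I)$ is closed and invariant under scaling by $\lambda\ge 0$.

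The substantive point is convexity, and for this I would pass to the orthogonal decomposition (with respect to the inner product $\langle A,B\rangle=\tr(AB)$ on symmetric matrices) $X=\tfrac{s}{n}I+X_0$, where $s:=\tr X$ and $X_0$ is the traceless part. Then $\tr(X^2)=\tfrac{s^2}{n}+\tr(X_0^2)=\tfrac{s^2}{n}+\|X_0\|_F^2$, so the first defining inequality becomes $\tfrac{n-\mu}{n}\,s^2\ge \mu\,\|X_0\|_F^2$. Because $\mu\in(0,n)$ we have $\tfrac{n-\mu}{n}>0$, so, using $s\ge 0$, membership in $\mathcal{K}(I)$ is equivalent to
\[
s \ge c\,\|X_0\|_F, \qquad c:=\sqrt{\tfrac{\mu n}{\,n-\mu\,}}>0 .
\]
Under the linear isomorphism $X\mapsto(s,X_0)$ from $T_IS^+_n$ onto $\mathbb{R}\oplus\mathfrak{m}_0$, where $\mathfrak{m}_0$ denotes the traceless symmetric matrices with the Euclidean norm $\|\cdot\|_F$, the set $\mathcal{K}(I)$ is precisely the second-order (Lorentz) cone $\{(s,v):s\ge c\|v\|_F\}$, a standard closed, pointed, solid convex cone. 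Concretely, convexity follows from the triangle inequality: if $X,Y\in\mathcal{K}(I)$ with $s_X,s_Y\ge 0$, then $s_{X+Y}=s_X+s_Y\ge c(\|X_0\|_F+\|Y_0\|_F)\ge c\|X_0+Y_0\|_F$ and $s_{X+Y}\ge 0$, so $X+Y\in\mathcal{K}(I)$; combined with the cone property this yields convexity.

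Finally I would read off (d) from the same normal form: if $X\in\mathcal{K}(I)$ and $-X\in\mathcal{K}(I)$, then $s_X\ge 0$ and $-s_X\ge 0$ force $s_X=0$, whence $c\|X_0\|_F\le 0$ gives $X_0=0$ and $X=0$, so $\mathcal{K}(I)$ is pointed — here $\mu<n$ is essential, since it is precisely what keeps $c$ finite and the cone strictly narrower than a half-space. Solidity is witnessed by $I$ itself, for which $s=n>0=c\|0\|_F$, so $I$ lies in the interior. I do not anticipate any genuine obstacle beyond carrying out the traceless decomposition correctly; the one place to be careful is the equivalence $\tfrac{n-\mu}{n}s^2\ge\mu\|X_0\|_F^2\iff s\ge c\|X_0\|_F$, which relies on the sign condition $s\ge 0$ built into the definition of $\mathcal{K}(I)$ and on $\mu\in(0,n)$ for the positivity of $\tfrac{n-\mu}{n}$.
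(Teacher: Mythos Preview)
Your proof is correct and complete. The paper does not actually supply its own proof of this proposition; it states the result with a citation to \cite{Mostajeran2017b} and then only comments on the geometric role of the parameter $\mu$ (half-space at $\mu=0$, ray at $\mu=n$). Your argument---reducing to the traceless decomposition $X=\tfrac{s}{n}I+X_0$ and recognizing $\mathcal{K}(I)$ as the second-order cone $\{(s,X_0):s\ge c\|X_0\|_F\}$ with $c=\sqrt{\mu n/(n-\mu)}$---is the natural one and makes the paper's informal remarks about the opening angle precise: the constant $c$ increases monotonically from $0$ to $\infty$ as $\mu$ ranges over $(0,n)$, which is exactly the ``half-space to ray'' interpolation the paper describes.
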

The parameter $\mu$ controls the opening angle of the cone. If $\mu = 0$, then (\ref{quad}) defines the half-space $\operatorname{tr}(X)\geq 0$. As $\mu$ increases, the opening angle of the cone becomes smaller and for $\mu = n$  (\ref{quad}) collapses to a ray.
Now for any fixed $\mu \in (0,n)$, we obtain a unique well-defined affine-invariant cone field given by
\begin{equation}  \label{quadField}
\mathcal{K}(\Sigma)=\{X\in T_{\Sigma}S^+_n:(\operatorname{tr}(\Sigma^{-1}X))^2-\mu \operatorname{tr}(\Sigma^{-1}X\Sigma^{-1}X) \geq 0, \ \operatorname{tr}(\Sigma^{-1}X) \geq 0\}.
\end{equation}
Of course not all $\operatorname{Ad}_{O(n)}$-invariant cones at $I$ are quadratic.  In particular, the cone of positive semidefinite matrices in $T_{I}S^+_n$ with spectral characterization $\mathcal{K}(I) = \{X\in T_I S^+_n: \lambda_i(X)\geq  0, \; i = 1, \ldots, n\}$ is also $\operatorname{Ad}_{O(n)}$-invariant. The homogeneous cone field generated by this cone at $I$ induces the well-known L\"owner order on $S^+_n$ \cite{Bhatia,Lowner1934}.

\section{Geodesics as conal curves}  \label{4}

A cone field $\mathcal{K}_{\mathcal{M}}$ on a manifold ${\mathcal{M}}$ gives rise to a conal order  $\prec$ on $\mathcal{M}$.
A continuous piecewise smooth curve $\gamma:[t_0,t_1]\rightarrow\mathcal{M}$ is called a conal curve if 
\begin{equation}
\gamma'(t)\in\mathcal{K}_{\mathcal{M}}\left(\gamma(t)\right),
\end{equation}
whenever the derivative exists. For points $a,b\in\mathcal{M}$, we write $a\prec b$ if there exists a conal curve $\gamma:[0,1]\rightarrow\mathcal{M}$ with $\gamma(0)=a$ and $\gamma(1)=b$. If the conal order is also antisymmetric, then it is a partial order. For $x\in\mathcal{M}$, we define the \emph{forward set} $\uparrow x=\{z\in\mathcal{M}:x\prec z\}$ and the \emph{backward set} $\downarrow x=\{z\in\mathcal{M}:z\prec x\}$. In the language of geometric control theory, the forward set of $x$ is called the reachable set from $x$ and the backward set of $x$ is the set controllable to $x$.  The closure $\leq_{\mathcal{K}}$ of this order is again an order and satisfies $x\leq_{\mathcal{K}} y$ if and only if $y\in\overline{\{z:x\prec_{\mathcal{K}} z\}}$. We say that $\mathcal{M}$ is \emph{globally orderable} if $\leq_{\mathcal{K}}$ is a partial order.

The conal order induced by a generic cone field on a path connected manifold is generally highly nontrivial. For instance, given a pair of points $a,b\in \mathcal{M}$, the question of whether $a$ and $b$ are ordered or not is not at all straightforward to answer, since to rule out the existence of an order relation one has to demonstrate that none of an infinite collection of continuous piecewise smooth curves connecting the pair $a,b$ is a conal curve. In this section, we discuss the significant role that geodesics play as conal curves on globally orderable Riemannian homogeneous spaces  with respect to homogeneous cone fields, thereby reducing the search for an order relation between any two points $a,b$ to a single statement on the pair of points. Thus, the use of invariant metric and conal structures on a globally orderable homogeneous space induces an order that is `tractable' in the sense that we can check to see whether two points are ordered by checking a single condition.

First note that if $G$ is a Lie group with a bi-invariant Riemannian metric, then the geodesics in $G$ through the identity element $e$ are precisely the one-parameter subgroups of $G$; i.e., curves $\gamma$ of the form $\gamma(t)=\exp t X$, where $X\in\mathfrak{g}$. That is, the Lie group exponential map coincides with the Riemannian exponential map in such cases. Geodesics through a point $a\in G$ have the form $\gamma(t)=a\cdot \exp(t \, a^{-1}\cdot X)$, where $X\in T_a G$. Now if $G$ is equipped with a left-invariant cone field $\mathcal{K}(g)=dL_{g}\vert_e\mathcal{K}(e)$, then the geodesic $\gamma(t)=a\cdot \exp(t \, a^{-1}\cdot X)$ through $a$ in the direction of $X$ is a conal curve if and only if $X\in\mathcal{K}(a)$, since 
\begin{equation}
\gamma'(t)=a\cdot\exp(t \, a^{-1}\cdot X) \cdot a^{-1}\cdot X \in \mathcal{K}(\gamma(t)) \Longleftrightarrow a^{-1}\cdot X \in \mathcal{K}(e) \Longleftrightarrow X \in \mathcal{K}(a).
\end{equation}
A similar result can be established on homogeneous spaces that are geodesic orbit (g.o.) spaces. A Riemannian manifold $\mathcal{M}=G/H$ is said to be a g.o. space if every geodesic in $\mathcal{M}$ is the orbit of a one-parameter subgroup of $G$. To show that $\mathcal{M}=G/H$ equipped with a homogeneous Riemannian metric is a g.o. space, it is sufficient to show that all geodesics through a single point are orbits of one-parameter subgroups by homogeneity. A Riemannian reductive homogeneous space $G/H$ with reductive decomposition $\mathfrak{g}=\mathfrak{h}\oplus\mathfrak{m}$ is said to be \emph{naturally reductive} if 
$\langle[X,Y]_{\mathfrak{m}},Z\rangle + \langle [X,Z]_{\mathfrak{m}},Y\rangle =0$ for all $X,Y,Z\in \mathfrak{m}$. If $\mathcal{M}=G/H$ is a naturally reductive homogeneous space, then the geodesics of $\mathcal{M}$ through the point $o=eH$ are precisely of the form
\begin{equation}
\gamma(t)=\exp(tX)\cdot o, \quad X\in \mathfrak{m}.
\end{equation}
Furthermore, all symmetric spaces are naturally reductive and thus g.o spaces \cite{Arv2003}.
\begin{prop}
Let $\mathcal{M}=G/H$ be a naturally reductive homogeneous space with reductive decomposition $\mathfrak{g}=\mathfrak{h}\oplus\mathfrak{m}$ that is endowed with a homogeneous Riemannian metric. If $\mathcal{K}$ is a homogeneous cone field on $\mathcal{M}$, then a geodesic $\gamma=\gamma(t)$ through a point $p\in\mathcal{M}$ is a conal curve if and only if $\gamma'(0)\in\mathcal{K}(p)$.
\end{prop}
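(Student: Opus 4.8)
The plan is to reduce the statement to the base-point $o=eH$ using $G$-invariance of the metric and of the cone field, and then to combine the one-parameter-subgroup description of geodesics afforded by natural reductivity with the invariance relation $d\tau_g\vert_x(\mathcal{K}(x))=\mathcal{K}(g\cdot x)$.

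First dispatch the trivial implication: if $\gamma$ is a conal curve, then by definition $\gamma'(t)\in\mathcal{K}(\gamma(t))$ wherever the derivative exists, and in particular $\gamma'(0)\in\mathcal{K}(\gamma(0))=\mathcal{K}(p)$. All the content is in the converse. For that, write $p=g\cdot o$ for some $g\in G$. Since the homogeneous metric is $G$-invariant, $\tau_g$ is an isometry, so every geodesic through $p$ has the form $\eta(t)=\tau_g(\gamma_0(t))$ with $\gamma_0$ a geodesic through $o$, $\gamma_0(0)=o$; moreover $\eta'(t)=d\tau_g\vert_{\gamma_0(t)}\,\gamma_0'(t)$. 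Because $d\tau_g\vert_x$ carries $\mathcal{K}(x)$ bijectively onto $\mathcal{K}(g\cdot x)$, we get $\eta'(t)\in\mathcal{K}(\eta(t))\iff\gamma_0'(t)\in\mathcal{K}(\gamma_0(t))$ for every $t$, and likewise $\eta'(0)\in\mathcal{K}(p)\iff\gamma_0'(0)\in\mathcal{K}(o)$. Hence it suffices to prove the claim for geodesics through $o$.

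Now invoke natural reductivity: a geodesic through $o$ is $\gamma_0(t)=\exp(tX)\cdot o=\tau_{\exp tX}(o)$ for some $X\in\mathfrak{m}$. Writing $\gamma_0(t+s)=\tau_{\exp tX}\big(\exp(sX)\cdot o\big)$, using $\exp((t+s)X)=\exp(tX)\exp(sX)$, and differentiating at $s=0$ yields
\[
\gamma_0'(t)=d\tau_{\exp tX}\big\vert_o\,\gamma_0'(0),
\]
which is the homogeneous-space analogue of the computation already carried out for left-invariant cone fields on a Lie group earlier in this section. Applying $G$-invariance of $\mathcal{K}$ with $g=\exp(tX)$ and $x=o$,
\[
\gamma_0'(0)\in\mathcal{K}(o)\iff d\tau_{\exp tX}\big\vert_o\,\gamma_0'(0)\in d\tau_{\exp tX}\big\vert_o\,\mathcal{K}(o)=\mathcal{K}(\exp(tX)\cdot o)=\mathcal{K}(\gamma_0(t)),
\]
so $\gamma_0'(0)\in\mathcal{K}(o)$ if and only if $\gamma_0'(t)\in\mathcal{K}(\gamma_0(t))$ for all $t$, which is exactly the assertion.

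The only genuinely delicate step is the identity $\gamma_0'(t)=d\tau_{\exp tX}\vert_o\,\gamma_0'(0)$; the rest is bookkeeping with the invariance relation $d\tau_g\vert_x(\mathcal{K}(x))=\mathcal{K}(g\cdot x)$. I would also remark that the whole argument is pointwise in $t$, so no global completeness of the geodesic is needed: the equivalence holds on whatever interval $\gamma$ happens to be defined.
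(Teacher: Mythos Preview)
Your proof is correct and follows essentially the same approach as the paper: reduce to $o$ by homogeneity, write the geodesic as $\gamma_0(t)=\exp(tX)\cdot o$ via natural reductivity, compute $\gamma_0'(t)=d\tau_{\exp tX}\vert_o\,\gamma_0'(0)$, and conclude from $G$-invariance of $\mathcal{K}$. Your write-up is more detailed (explicitly handling the trivial direction and the reduction to $o$), but the core argument is identical.
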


\begin{proof} 
By homogeneity, it is sufficient to consider geodesics through the base-point $o\in\mathcal{M}$, which are of the form $\gamma(t)=\exp(tX)\cdot o$, $X\in\mathfrak{m}$. Let $\tau_g:\mathcal{M}\rightarrow \mathcal{M}$ be the map $\tau_g(x)=g\cdot x$ for $g\in G$. We have
\begin{equation}
\gamma'(t)=d\tau_{\exp(tX)}\big\vert_{o}X\in\mathcal{K}(\exp(tX)\cdot o)  \Longleftrightarrow X\in\mathcal{K}(o),
\end{equation} 
as $\mathcal{K}$ is homogeneous. That is, $\gamma$ is a conal curve if and only if its initial tangent vector lies in the cone at $o$. See figure \ref{orbit conal}.
\qed
\end{proof}

\begin{figure}
\centering
\includegraphics[width=0.8\linewidth]{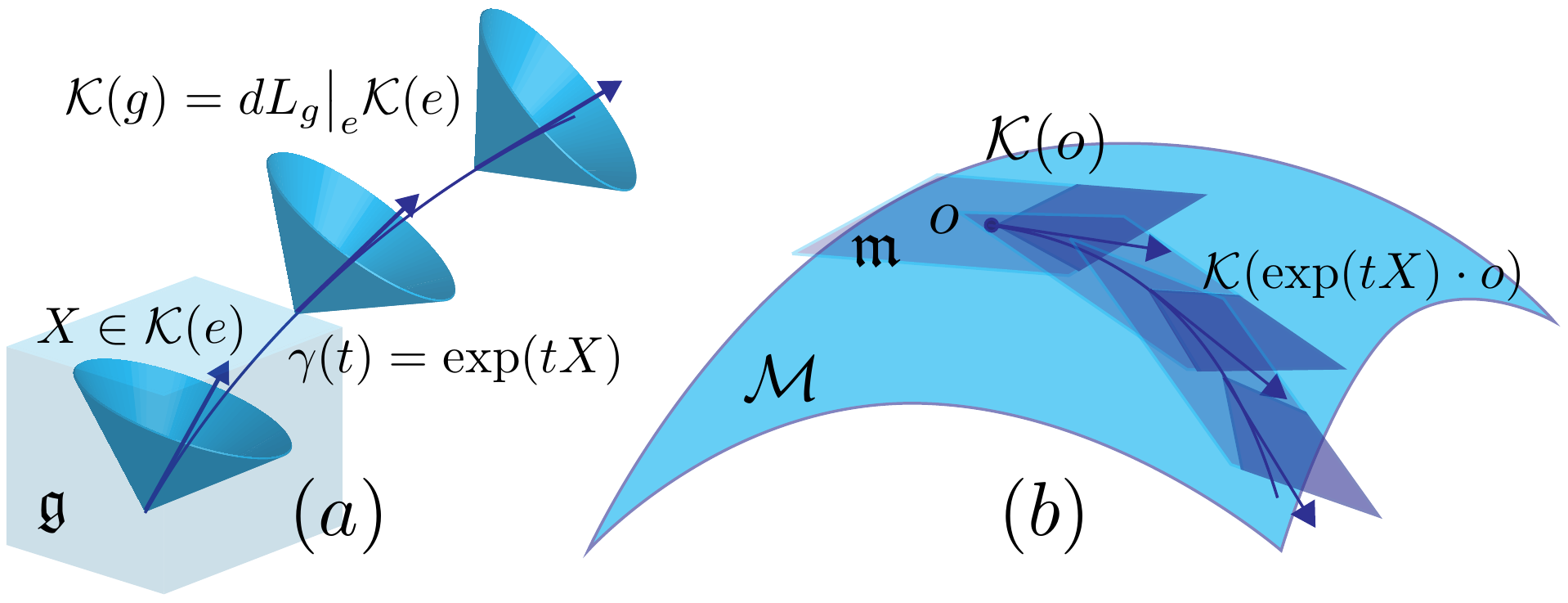}
  \caption{Conal curves arising as orbits of one-parameter subgroups on $(a)$ a Lie group equipped with an invariant cone field, and $(b)$ a naturally reductive homogeneous space with a homogeneous cone field. If the tangent vector lies within the cone at one point, then the whole curve is conal.
  }
  \label{orbit conal}
\end{figure}

We now consider the question of whether two given points $a,b$ on a homogeneous space $\mathcal{M}=G/H$  equipped with a homogeneous cone field $\mathcal{K}$ are ordered. We first treat the example of $\mathbb{R}^n$ endowed with the Euclidean metric and a constant cone field, i.e. invariant with respect to translations. Given $a,b\in\mathbb{R}^n$, we write $a\prec b$ if there exists a curve $\gamma:[0,1]\rightarrow \mathbb{R}^n$ such that $\gamma(0)=a$, $\gamma(1)=b$, and $\gamma'(t)\in\mathcal{K}$, for all $t\in[0,1]$. Since $\mathcal{K}$ is closed and convex, we have
\begin{equation}
\int_0^1\gamma'(t)dt\in \mathcal{K},
\end{equation}
as the integral can be thought of as the limit of a Riemann sum. But, of course, the integral is simply $\gamma(1)-\gamma(0)=b-a$. Thus, to check whether $a$ and $b$ are ordered with respect to a constant cone field, it is sufficient to check $b-a\in\mathcal{K}$; i.e. $a\prec b$ if and only if the straight line from $a$ to $b$ is a conal curve. See figure \ref{real conal}.

\begin{figure}
\centering
\includegraphics[width=0.6\linewidth]{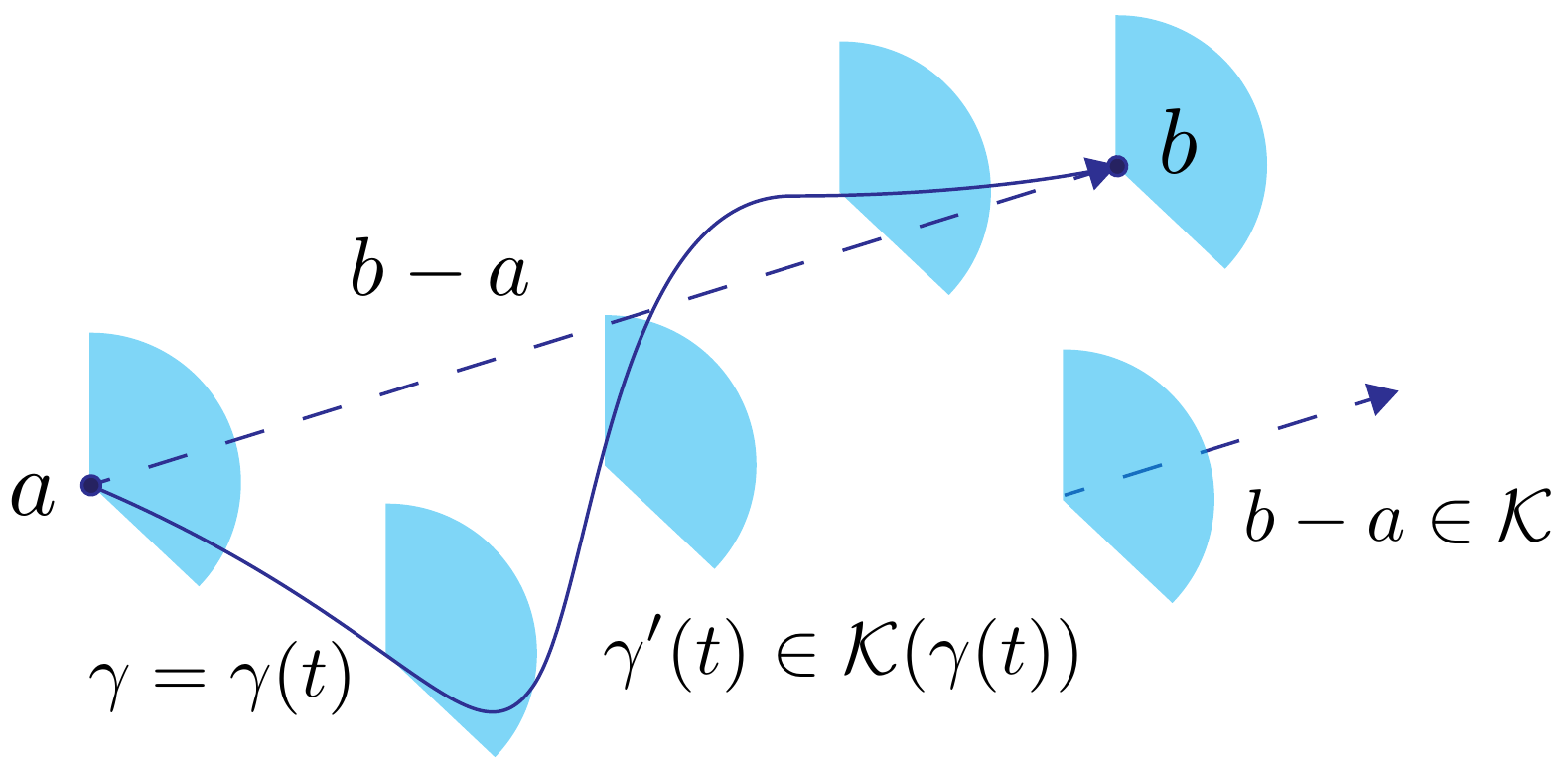}
  \caption{There exists a conal curve joining point $a$ to $b$ in a vector space endowed with a translation invariant cone field if and only if the straight line joining $a$ to $b$ is conal. 
  }
  \label{real conal}
\end{figure}

Now suppose that $\mathcal{M}=G/H$ is a reductive homogeneous space with a global order induced by a homogeneous cone field $\mathcal{K}$. The following theorem is derived from \cite{Neeb1991}.

\begin{thm} \label{Causal semigroup theorem}
Let $\mathcal{K}$ be a homogeneous cone field on $G/H$ arising as the projection of the left-invariant wedge field on $G$ generated by a wedge $W$ as described in Theorem \ref{cone thm}.
If $S=\overline{\langle\exp W\rangle H}\subseteq G$, then
$S=\pi^{-1}\left(\{x\in\mathcal{M}:o\leq_{\mathcal{K}} x\}\right)$ and $G/H$ is globally orderable with respect to $\mathcal{K}$ if and only if $W=\boldsymbol{L}(S)$, where
\begin{equation}
\boldsymbol{L}(S)=\{Z\in\mathfrak{g}:\exp(\mathbb{R}^+Z)\subseteq S\}.
\end{equation}
\end{thm}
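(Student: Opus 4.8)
\emph{Proof sketch.} The statement is essentially Neeb's globality theorem \cite{Neeb1991} transported into the language of conal orders, and the plan is to supply the dictionary together with the two intermediate claims that carry the weight. Write $o=eH$, and recall from Theorem \ref{cone thm} that $d\pi\vert_g$ carries the left-invariant wedge $W_G(g)=dL_g\vert_e W$ \emph{onto} $\mathcal{K}_\mathcal{M}(\pi(g))$. First I would record the structure of $S=\overline{\langle\exp W\rangle H}$. Condition (ii) gives $h\exp(w)=\exp(\mathrm{Ad}(h)w)h$ with $\mathrm{Ad}(h)w\in W$, so $\langle\exp W\rangle H$ is already a subsemigroup of $G$ and $S$ is a closed subsemigroup; condition (i) gives $\mathfrak{h}\subseteq W$, hence $\exp\mathfrak{h}\subseteq\langle\exp W\rangle$ and, with the explicit right factor, $H\subseteq S$ and $SH=HS=S$. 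Thus $S$ is saturated for $\pi$ (so $S=\pi^{-1}(\pi(S))$), and, $\pi$ being an open map and $S$ closed and right-$H$-invariant, $\pi(S)$ is closed in $\mathcal{M}$.

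The first claim is $S=\pi^{-1}(\{x\in\mathcal{M}:o\leq_\mathcal{K}x\})$, and by the definition of the closed order this reduces to $\overline{\langle\exp W\rangle\cdot o}=\{x:o\leq_\mathcal{K}x\}=\pi(S)$. For $w\in W$ the one-parameter semigroup $t\mapsto\exp(tw)$ is a conal curve in $G$, so its projection $t\mapsto\exp(tw)\cdot o$ is conal in $\mathcal{M}$; concatenating such arcs and translating by elements of $G$ (using $G$-invariance of $\mathcal{K}_\mathcal{M}$ to move the basepoint) shows $o\prec g\cdot o$ for every $g\in\langle\exp W\rangle$. Conversely, a conal curve in $\mathcal{M}$ from $o$ lifts — by pointwise surjectivity of $d\pi\vert_g$ on the wedges — to a trajectory of the invariant differential inclusion $\dot g\in W_G(g)$ on $G$ starting at $e$; since $W$ is closed and convex, such trajectories are uniform limits of products $\exp(t_1w_1)\cdots\exp(t_kw_k)$ with $t_i\geq 0$ and $w_i\in W$, so their endpoints lie in $\overline{\langle\exp W\rangle}$. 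Passing to closures, using that $\pi(S)$ is closed and that every element of $S$ is a limit of elements of $\langle\exp W\rangle H$ whose $\pi$-images lie in $\overline{\langle\exp W\rangle\cdot o}$, forces the three sets to coincide, and then $S=\pi^{-1}(\pi(S))=\pi^{-1}(\{x:o\leq_\mathcal{K}x\})$. The lifting step and the reachable-set description are exactly the technical core of the homogeneous conal theory of \cite{Hilgert1989}, and I would quote them rather than redo them.

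For global orderability, note that $\leq_\mathcal{K}$ is by construction reflexive and transitive, so $G/H$ is globally orderable precisely when $\leq_\mathcal{K}$ is antisymmetric. Homogeneity together with the first claim gives $\pi(a)\leq_\mathcal{K}\pi(b)\Leftrightarrow a^{-1}b\in S$ (well defined because $SH=HS=S$): apply $\tau_{a^{-1}}$, a diffeomorphism preserving $\mathcal{K}_\mathcal{M}$ and hence $\leq_\mathcal{K}$, to reduce to $o\leq_\mathcal{K}\pi(a^{-1}b)$. Consequently antisymmetry of $\leq_\mathcal{K}$ is equivalent to the implication $g\in S\cap S^{-1}\Rightarrow g\in H$, i.e., since $H\subseteq S\cap S^{-1}$, to $S\cap S^{-1}=H$.

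It remains to identify $S\cap S^{-1}=H$ with $W=\boldsymbol{L}(S)$, and this is the part I expect to be the main obstacle. One half is routine: $W\subseteq\boldsymbol{L}(S)$ always, since $\exp(\mathbb{R}^+w)\subseteq\langle\exp W\rangle\subseteq S$ for $w\in W$; and if $S\cap S^{-1}=H$, then any $Z$ in the edge $\boldsymbol{L}(S)\cap-\boldsymbol{L}(S)$ has $\exp(\mathbb{R}Z)\subseteq S\cap S^{-1}=H$, so $Z\in\mathfrak{h}$ and the edge of $\boldsymbol{L}(S)$ equals $\mathfrak{h}=W\cap-W$. The genuine difficulty is the reverse implication — that the whole wedge $\boldsymbol{L}(S)$, not merely its edge, coincides with $W$; equivalently, that the Lie wedge $W$ (it is a Lie wedge precisely because $\mathrm{Ad}(H)W=W$) is \emph{global} in the sense of \cite{Neeb1991}. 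Globality can fail for perfectly well-behaved pointed wedges whose generated semigroup spreads out in $G$, and the content of Neeb's theorem is precisely a workable criterion — here packaged as $W=\boldsymbol{L}(S)$ — ensuring that it does not. For this step I would invoke \cite{Neeb1991} directly.
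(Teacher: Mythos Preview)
The paper does not actually prove this theorem: it simply states that ``the following theorem is derived from \cite{Neeb1991}'' and moves on. So there is no proof in the paper to compare your sketch against; both you and the authors ultimately defer to Neeb for the substantive content.

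That said, your sketch goes well beyond what the paper does, and the dictionary you set up is correct. The semigroup properties of $S$ (closure, $SH=HS=S$, saturation for $\pi$) are derived correctly from conditions (i) and (ii); the identification $S=\pi^{-1}(\{x:o\leq_\mathcal{K}x\})$ via lifting conal curves to $W_G$-trajectories is the standard argument from \cite{Hilgert1989}, and you are right to quote it; and the reduction of antisymmetry of $\leq_\mathcal{K}$ to $S\cap S^{-1}=H$ via homogeneity is clean and correct. You are also honest about where the real depth lies: the equivalence $S\cap S^{-1}=H\Leftrightarrow W=\boldsymbol{L}(S)$ is not a formality, and your edge computation only constrains $\boldsymbol{L}(S)\cap-\boldsymbol{L}(S)$, not $\boldsymbol{L}(S)$ itself. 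That step genuinely is Neeb's globality theorem, and invoking it is exactly what the paper does (implicitly, for the whole statement). One minor caution: in the direction $W=\boldsymbol{L}(S)\Rightarrow S\cap S^{-1}=H$, matching the edge of $\boldsymbol{L}(S)$ with $\mathfrak{h}$ only pins down the identity component of the unit group; getting all of $H$ requires the additional structure present in Neeb's setup, so that direction is not quite as routine as you suggest either.
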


It follows from the above result that any element of the set  $\pi^{-1}(\uparrow o)$ in $G$ can be reached as the image of a vector $Z$ in the wedge $W$ under the exponential map. This observation can in turn be used to prove the following result. For simplicity, the theorem is formulated with respect to symmetric spaces, which include all of the examples considered in this paper.

\begin{thm} \label{Causal semigroup theorem 2}
Let $\mathcal{M}=G/H$ be a globally orderable symmetric space with a homogeneous Riemannian metric and homogeneous cone field $\mathcal{K}$. We have $x_1\leq_{\mathcal{K}} x_2$ if and only if the geodesic from $x_1$ to $x_2$ is a conal curve. 
\end{thm}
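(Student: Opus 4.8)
The plan is to exploit the g.o.\ structure of the symmetric space $\mathcal{M}=G/H$ together with Theorem~\ref{Causal semigroup theorem}, which translates the order relation $o\leq_{\mathcal{K}}x$ into membership of a suitable element of $\pi^{-1}(\uparrow o)$ in the semigroup $S=\overline{\langle\exp W\rangle H}$. By homogeneity (left-invariance of both the metric and the cone field under $G$) it suffices to prove the claim for $x_1=o$ and an arbitrary $x_2=x\in\mathcal{M}$: if $a=g\cdot o$ and $b=g\cdot o'$ then $a\leq_{\mathcal{K}}b$ iff $o\leq_{\mathcal{K}} g^{-1}b$, and $\tau_g$ carries geodesics to geodesics and conal curves to conal curves.

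The ``if'' direction is immediate from the earlier Proposition on geodesics as conal curves: if the geodesic from $o$ to $x$ is conal then it is in particular a conal curve joining $o$ to $x$, so $o\prec x$ and hence $o\leq_{\mathcal{K}}x$. The substance is the ``only if'' direction. Suppose $o\leq_{\mathcal{K}}x$. First I would reduce to the open relation $o\prec_{\mathcal{K}}x$, or rather work directly with the closed order: by Theorem~\ref{Causal semigroup theorem}, $x\in\ \uparrow_{\leq}o$ means every lift $g\in\pi^{-1}(x)$ lies in $S$, and by the characterization $W=\boldsymbol{L}(S)$ (which holds precisely because $\mathcal{M}$ is globally orderable) together with the fact — noted in the excerpt immediately after Theorem~\ref{Causal semigroup theorem} — that any element of $\pi^{-1}(\uparrow o)$ is $\exp Z$ for some $Z\in W$, we may pick a lift of $x$ of the form $\exp Z$ with $Z\in W$. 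Now decompose $Z=Z_{\mathfrak h}+Z_{\mathfrak m}$ along $\mathfrak g=\mathfrak h\oplus\mathfrak m$. Since $\pi(\exp Z)=x$ and we are free to adjust the lift within the coset, I would argue — using the involution $\sigma$ of the symmetric space and the $\mathrm{Ad}(H)$-invariance of $W$ — that one can in fact take $Z\in W\cap\mathfrak m$: here the symmetric-space structure is essential, because it is what lets us split off the $\mathfrak h$-part cleanly (for a general reductive g.o.\ space this step can fail, which is why the theorem is stated for symmetric spaces). Then $\gamma(t)=\exp(tZ)\cdot o$, $t\in[0,1]$, is a geodesic from $o$ to $x$ (naturally reductive $\Rightarrow$ g.o., with geodesics through $o$ exactly of this form), and since $Z\in W$ projects into $\mathcal{K}(o)$, the Proposition gives that $\gamma$ is conal; thus the geodesic from $o$ to $x$ is a conal curve, and uniqueness of the geodesic (granting $x$ is in a neighbourhood where it is unique, or interpreting ``the geodesic'' as ``a geodesic'') finishes the argument.

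The main obstacle I anticipate is precisely the step of choosing the lift of $x$ inside $W\cap\mathfrak m$ rather than merely in $W\subseteq\mathfrak g$: passing from ``$\exp Z\in\pi^{-1}(x)$ with $Z\in W$'' to ``$\exp Z'\in\pi^{-1}(x)$ with $Z'\in W\cap\mathfrak m$'' requires controlling the Baker--Campbell--Hausdorff interaction between the $\mathfrak h$ and $\mathfrak m$ components and invoking $[\mathfrak h,\mathfrak m]\subseteq\mathfrak m$, $[\mathfrak m,\mathfrak m]\subseteq\mathfrak h$ (the symmetric-space bracket relations) together with $\mathrm{Ad}(H)W=W$; getting this exactly right, including the closure operations hidden in $S=\overline{\langle\exp W\rangle H}$ and $\leq_{\mathcal K}$, is the delicate part. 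A secondary subtlety is global uniqueness of geodesics on $\mathcal{M}$, which in general fails (e.g.\ on compact symmetric spaces), so the statement should be read with ``the geodesic from $x_1$ to $x_2$'' meaning a minimizing geodesic, or the conclusion phrased as the existence of a conal geodesic joining the two points.
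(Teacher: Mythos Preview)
Your overall architecture matches the paper's proof exactly: reduce to $x_1=o$ by homogeneity, handle the ``if'' direction via the Proposition on geodesics as conal curves, and for ``only if'' invoke Theorem~\ref{Causal semigroup theorem} to produce $Z\in W$ with $x=(\exp Z)\cdot o$, then pass to some $X\in W\cap\mathfrak m=\mathcal K(o)$ with $x=(\exp X)\cdot o$ so that $\gamma(t)=\exp(tX)\cdot o$ is the desired conal geodesic. You have also correctly located the crux of the argument.

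Where your proposal diverges from the paper is precisely at that crux. You attempt to adjust $Z$ at the Lie-algebra level, decomposing $Z=Z_{\mathfrak h}+Z_{\mathfrak m}$ and hoping that BCH together with the symmetric-space bracket relations $[\mathfrak h,\mathfrak m]\subseteq\mathfrak m$, $[\mathfrak m,\mathfrak m]\subseteq\mathfrak h$ and $\mathrm{Ad}(H)W=W$ will let you replace $Z$ by some $Z'\in W\cap\mathfrak m$ in the same fiber. This is not how the paper proceeds, and it is not clear that this algebra-level manipulation can be made to work: the fiber $\pi^{-1}(x)=\{(\exp Z)h:h\in H\}$ is parametrized at the group level, and there is no obvious way to translate right-multiplication by $h$ into an additive correction of $Z$ that stays inside $W$ and lands in $\mathfrak m$. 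The paper instead works directly at the group level and invokes \emph{Lawson's polar decomposition theorem}: every element $g=\exp Z$ of the semigroup $S$ factors uniquely as $g=(\exp X)h$ with $X\in W\cap\mathfrak m$ and $h\in H$. Then $x=(\exp Z)\cdot o=(\exp X)h\cdot o=(\exp X)\cdot o$, and the rest follows as you wrote. So the missing ingredient is not a computation but a structural result about the semigroup $S$ in a symmetric space; once you know Lawson's theorem, the step you flagged as ``delicate'' becomes a one-liner.

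Your secondary remark about uniqueness of geodesics is well taken; the paper does not address it either, and the statement should indeed be read as asserting the existence of a conal geodesic from $x_1$ to $x_2$.
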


\begin{proof}
By homogeneity, it is sufficient to consider the case where $x_1=o$ and $x_2=x$. We define a wedge $W$ in $\mathfrak{g}$ by
$
W:=\{Y+X: X\in\mathcal{K}(o), Y\in\mathfrak{h}\}\subset\mathfrak{g}=\mathfrak{h}\oplus\mathfrak{m},
$
where $\mathcal{K}(o)\subset\mathfrak{m}$. Note that $W$ satisfies the relevant properties in Theorem \ref{cone thm} by construction. If $o\leq_{\mathcal{K}} x$, it follows from Theorem \ref{Causal semigroup theorem}  that there exists $Z\in W$ such that 
\begin{equation} \label{pi exp}
x=\pi(\exp Z)=(\exp Z)\cdot o
\end{equation}
By Lawson's polar decomposition theorem \cite{Lawson1991}, any element $g=\exp Z$ of the semigroup $S$ admits a unique decomposition as $g=(\exp X)h$ with $X\in W\cap\mathfrak{m}=\mathcal{K}(o)$ and $h\in H$. Thus, we have
\begin{equation}
x=(\exp Z)\cdot o=(\exp X)h\cdot o=(\exp X) \cdot o,
\end{equation}
since $h\cdot o = o$ for any $h\in H$. Thus, it follows that $\gamma(t)=\exp (tX) \cdot o$ is a conal curve. Since all symmetric spaces are g.o. spaces, $\gamma$ is precisely the Riemannian geodesic from $o$ to $x$.
\qed
\end{proof}

\begin{corollary}
Let $\mathcal{M}$ be a globally orderable symmetric space with metric and conal structures as in Theorem \ref{Causal semigroup theorem 2}. A map $F:\mathcal{M}\rightarrow\mathcal{M}$ is monotone if and only if the geodesic from $F(x_1)$ to $F(x_2)$ is conal whenever the geodesic from $x_1$ to $x_2$ is conal.
\end{corollary}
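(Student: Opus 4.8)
The plan is to deduce the corollary directly from Theorem \ref{Causal semigroup theorem 2} by unwinding the definition of monotonicity. Recall that a map $F:\mathcal{M}\rightarrow\mathcal{M}$ is \emph{monotone} precisely when it preserves the order induced by the cone field; that is, $x_1\leq_{\mathcal{K}} x_2$ implies $F(x_1)\leq_{\mathcal{K}} F(x_2)$ for all $x_1,x_2\in\mathcal{M}$. Since $\mathcal{M}$ is globally orderable, $\leq_{\mathcal{K}}$ is genuinely a partial order, so this notion of order preservation is well posed and monotonicity is exactly the statement that $F$ is order-preserving with respect to $\leq_{\mathcal{K}}$.

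The key step is then to replace each occurrence of an order relation by its geometric characterization. First I would apply Theorem \ref{Causal semigroup theorem 2} to the pair $(x_1,x_2)$: the relation $x_1\leq_{\mathcal{K}} x_2$ holds if and only if the (unique) geodesic from $x_1$ to $x_2$ is a conal curve. Next I would apply the same theorem to the pair $(F(x_1),F(x_2))$: the relation $F(x_1)\leq_{\mathcal{K}} F(x_2)$ holds if and only if the geodesic from $F(x_1)$ to $F(x_2)$ is conal. Substituting these two equivalences into the definition of monotonicity turns the implication ``$x_1\leq_{\mathcal{K}} x_2 \Rightarrow F(x_1)\leq_{\mathcal{K}} F(x_2)$'' into ``the geodesic from $x_1$ to $x_2$ conal $\Rightarrow$ the geodesic from $F(x_1)$ to $F(x_2)$ conal'', which is the claimed condition. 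Quantifying over all pairs $x_1,x_2\in\mathcal{M}$ gives the equivalence in both directions.

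There is essentially no serious obstacle here: the corollary is a reformulation of Theorem \ref{Causal semigroup theorem 2} rather than a new result. The only points requiring mild care are (i) ensuring the hypotheses of Theorem \ref{Causal semigroup theorem 2} are in force, namely that $\mathcal{M}$ is a globally orderable symmetric space carrying a homogeneous Riemannian metric and a homogeneous cone field $\mathcal{K}$ — this is exactly what ``metric and conal structures as in Theorem \ref{Causal semigroup theorem 2}'' supplies — and (ii) noting that the geodesic between two points is unique (or at least that conality of \emph{the} geodesic is well defined on a g.o.\ symmetric space), so that the phrase ``the geodesic from $x_1$ to $x_2$ is conal'' is unambiguous. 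With these remarks in place the argument is a one-line substitution, so I would keep the written proof correspondingly short.
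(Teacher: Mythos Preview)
Your proposal is correct and matches the paper's treatment: the paper states the corollary without proof, treating it as an immediate consequence of Theorem~\ref{Causal semigroup theorem 2}, and your argument is precisely the one-line substitution that justifies this.
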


\subsection{Example: affine-invariant orders on $S^+_n$}

Recall from Section \ref{examples} that $S^+_n$ is a homogeneous space with quotient manifold structure $GL(n)/O(n)$. The exponential map $\exp: \mathfrak{m}\rightarrow S^+_n$ given by the usual matrix power series is a surjective map from the space of $n\times n$ symmetric matrices $\mathfrak{m}$ onto $S^+_n$. Here $\mathfrak{m}$ is identified with the tangent space of $S^+_n$ at the identity $I$. Thus, the logarithm map $\log:S^+_n\rightarrow \mathfrak{m}$ is well-defined on all of $S^+_n$. It is well-known that $S^+_n$ can be equipped with a standard affine-invariant Riemannian metric $\langle \cdot, \cdot \rangle_{\Sigma}$ given by
$\langle X, Y \rangle_{\Sigma}= \tr\left(\Sigma^{-1}X\Sigma^{-1}Y\right)$ for $\Sigma\in S^+_n$, $X,Y\in T_{\Sigma}S^+_n$,
which turns $S^+_n=GL(n)/O(n)$ into a non-compact Riemannian homogeneous space with negative curvature \cite{Lang2012}.
The Riemannian distance between any two points $\Sigma_1,\Sigma_2\in S^+_n$ is given by
\begin{equation} \label{S+ distance}
d(\Sigma_1,\Sigma_2)=\|\log(\Sigma_1^{-\frac{1}{2}}\Sigma_2\Sigma_1^{-\frac{1}{2}})\|_{\mathfrak{m}}=\left(\sum_{i=1}^n\log^2\lambda_i(\Sigma_1^{-1}\Sigma_2)\right)^{1/2},
\end{equation}
where $\lambda_i(\Sigma_1^{-1}\Sigma_2)$, $(i=1, \ldots, n)$ denote the $n$ real and positive eigenvalues of $\Sigma^{-1}_1\Sigma_2$. It follows from (\ref{S+ distance}) that $d(\Sigma^{-1}_1,\Sigma^{-1}_2)=d(\Sigma_1,\Sigma_2)$. Thus, the inversion $\Sigma\mapsto \Sigma^{-1}$ provides an involutive isometry on $S^+_n$, which shows that $S^+_n$ is a Riemannian symmetric space and hence a g.o. space. Therefore, given $\Sigma \in S^+_n$, there exists a unique $X=\log\Sigma\in\mathfrak{m}$, and the geodesic from $I$ to $\Sigma$ is given by $\gamma(t)=\exp(tX)$. Note that the domain of the injective curve $\gamma$ can be extended to all of $\mathbb{R}$. 

In addition to the affine-invariant geometry of $S^+_n$, there is a natural `flat' or translational geometry of $S^+_n$ viewed as a cone embedded in the flat space of $n\times n$ symmetric matrices $\operatorname{Sym}_n$. The L\"owner order $\geq_{L}$ can be defined on $\operatorname{Sym}_n$ by 
\begin{equation}  \label{Lowner 1}
A\geq_{L} B \Longleftrightarrow A-B \geq_{L} O,
\end{equation}
where $A-B \geq_{L} O$ means that $A-B$ is positive semidefinite. The restriction of (\ref{Lowner 1}) to $S^+_n$ defines a partial order on $S^+_n$ that coincides with the order induced on $S^+_n\cong GL(n)/O(n)$ by the homogeneous cone field generated by the cone of positive semidefinite matrices in $\mathfrak{m}\cong T_{I}S^+_n$. That is, in the special case where the cone $\mathcal{K}(I)$ at identity is itself the cone of positive semidefinite matrices, the translation-invariant and affine-invariant cone fields generated by $\mathcal{K}(I)$ agree on $S^+_n$. This is generally not the case for other choices of $\mathcal{K}(I)$, as shown in \cite{Mostajeran2017b}. 

Now let $\mathcal{K}$ be an affine-invariant cone field on $S^+_n$. Such a cone field induces a global partial order on $S^+_n$ \cite{Mostajeran2018}. Given $\Sigma\in S^+_n$, Theorem \ref{Causal semigroup theorem 2}  implies that $I \leq_{\mathcal{K}} \Sigma$ if and only if the geodesic $\gamma(t)=\exp(t\log\Sigma)$ from $I$ to $\Sigma$ is a conal curve; i.e., precisely if $\log\Sigma \in \mathcal{K}(I)$.  By homogeneity, it follows that $\Sigma_1\leq_{\mathcal{K}}\Sigma_2$ if and only if $\log(\Sigma_1^{-1/2}\Sigma_2\Sigma_1^{-1/2})\in\mathcal{K}(I)$. If $\mathcal{K}$ is the cone field corresponding to the  L\"owner order, then $\Sigma_1\leq_{\mathcal{K}}\Sigma_2$ if and only if $\log(\Sigma_1^{-1/2}\Sigma_2\Sigma_1^{-1/2})\geq_{L} O$. If $\mathcal{K}$ is any of the quadratic affine-invariant cone fields in (\ref{quadField}), then $\Sigma_1\leq_{\mathcal{K}}\Sigma_2$ if and only if
\begin{equation} \label{conic 4}
\begin{cases}
\tr\left(\log(\Sigma_1^{-1/2}\Sigma_2\Sigma_2^{-1/2})\right)\geq 0, \\
\left(\tr(\log(\Sigma_1^{-1/2}\Sigma_2\Sigma_1^{-1/2}))\right)^2-\mu\tr\left[(\log(\Sigma_1^{-1/2}\Sigma_2\Sigma_1^{-1/2}))^2\right]\geq 0,
\end{cases}
\end{equation}
which is equivalent to
\begin{equation} \label{conic 6}
\begin{cases}
\sum_i\log\lambda_i\geq 0, \\ \left(\sum_i\log\lambda_i\right)^2-\mu\sum_i(\log\lambda_i)^2\geq 0,
\end{cases}
\end{equation}
where $\lambda_i=\lambda(\Sigma_1^{-1/2}\Sigma_2\Sigma_1^{-1/2})=\lambda_i(\Sigma_2\Sigma_1^{-1})$ $(i=1,...,n)$ denote the $n$ real and positive eigenvalues of $\Sigma_2\Sigma_1^{-1}$. Thus, the use of invariant causal structures has enabled us to answer the question of  whether a pair of positive definite matrices $\Sigma_1$ and $\Sigma_2$ are ordered 
by checking a pair of inequalities involving the spectrum of $\Sigma_2\Sigma_1^{-1}$.

\section{Invariant differential positivity}  \label{6}

\subsection{Cone fields of rank $k$}  

A closed set $\mathcal{C}$ in a vector space $\mathcal{V}$ is said to be a cone of rank $k$ if (i) for any $\lambda\in\mathbb{R}$, $\lambda\cdot\mathcal{C}=\mathcal{C}$, and (ii) the maximum dimension of any subspace of $\mathcal{V}$ contained in $\mathcal{C}$ is $k$. 
Note that if $\mathcal{K}$ is a pointed convex cone, $\mathcal{C}=\mathcal{K}\cup-\mathcal{K}$ is a cone of rank $1$ according to the stated definition. A polyhedral cone of rank $k$ is of the form $\mathcal{C}=\tilde{\mathcal{K}}\cup-\tilde{\mathcal{K}}$, where $\tilde{\mathcal{K}}$ is given as the intersection of a collection of half-spaces. A second class of cones of rank $k$ can be defined using quadratic forms. If $P$ is a symmetric $n\times n$ matrix with $k$ positive eigenvalues and $n-k$ negative eigenvalues, then the set
$
\mathcal{C}=\{x\in\mathcal{V}:\langle x,Px\rangle\geq0\}$,
defines a cone of rank $k$. Note that the closure of the complement of a quadratic cone of rank $k$ in $\mathbb{R}^n$ is a quadratic cone of rank $n-k$. See figure \ref{rank k} for an illustration of polyhedral and quadratic cones of rank 2 in three dimensions.

\begin{figure} 
\centering
\includegraphics[width=0.7\linewidth]{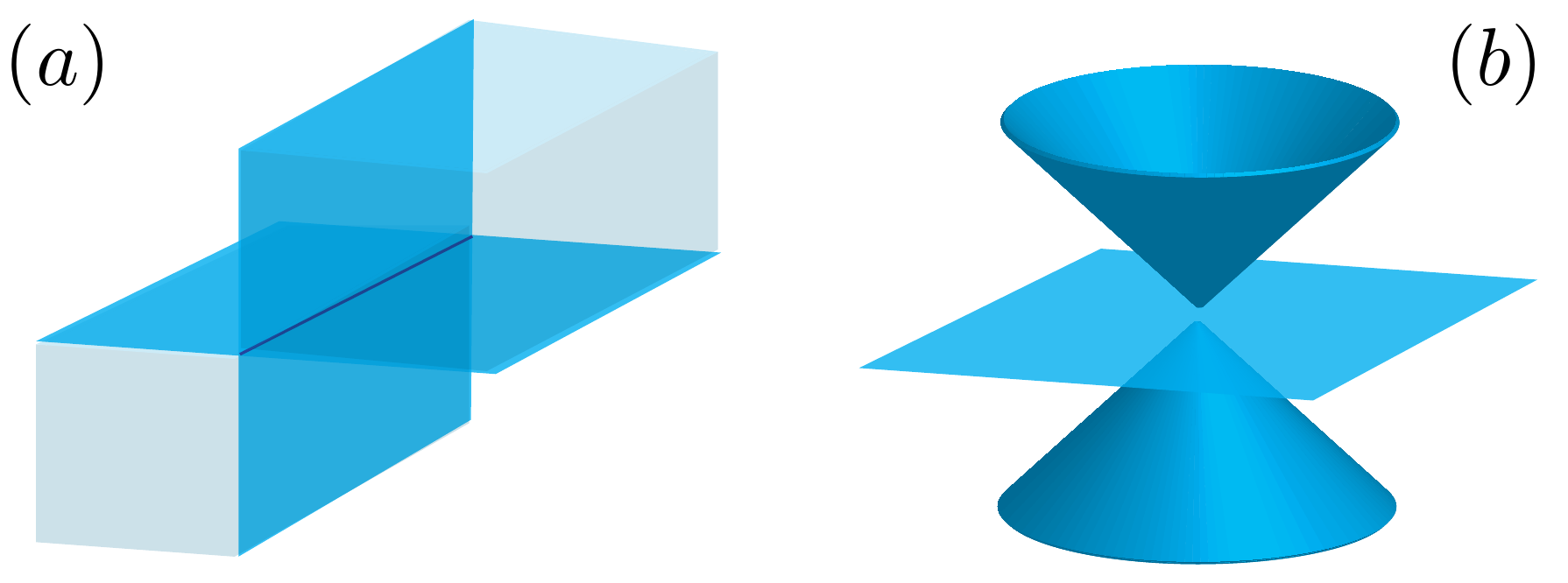}
  \caption{$(a)$ A polyhedral cone of rank $2$ in three dimensions. $(b)$ The double cone is a quadratic cone of rank $1$. The closure of its complement is a cone of rank $2$. Note how a plane can be fitted within this set.
  }
  \label{rank k}
\end{figure}

We can define homogeneous cone fields of rank $k$ on a homogeneous space $\mathcal{M}=G/H$ in an analogous way to pointed and convex homogeneous cone fields. That is, a cone field $\mathcal{C}$ of rank $k$ is homogeneous if it satisfies
\begin{equation}
\mathcal{C}(y)=d\tau_g\big\vert_x\mathcal{C}(x),
\end{equation}
for all $x,y\in \mathcal{M}$ and $g\in G$ such that $y=g\cdot x$. Given a cone $\mathcal{C}(o)\subset T_o\mathcal{M}$ at the base-point $o=eH$, we can extend $\mathcal{C}(o)$ to a unique homogeneous cone field on $\mathcal{M}$ if and only if $\mathcal{C}(o)$ is $\Ad_{H}$-invariant: $\Ad(H)\mathcal{C}(o)=\mathcal{C}(o)$. Given a homogeneous cone field $\mathcal{C}$ of rank $k$ on $\mathcal{M}$, we say that $x_1$ and $x_2$ are related via the cone field and write $x_1\sim x_2$ if there exists a curve $\gamma:[0,1]\rightarrow \mathcal{M}$ such that $\gamma(0)=x_1$, $\gamma(1)=x_2$ and $\gamma'(t)\in \mathcal{C}(\gamma(t))$. 

\subsection{Monotonicity}

A linear map $T:\mathcal{V}\rightarrow\mathcal{V}$ on a vector space $\mathcal{V}$ is positive with respect to a cone $\mathcal{C}$ of any rank $k$ if $T(\mathcal{C})\subseteq\mathcal{C}$. Strict positivity is characterized by $T(\mathcal{C})\subset\operatorname{int} \mathcal{C}$. A smooth map $F:\mathcal{M}\rightarrow\mathcal{M}$ is said to be differentially positive with respect to a cone field $\mathcal{C}=\mathcal{C}(x)$ on $\mathcal{M}$ if $dF\vert_{x}\mathcal{C}(x)\subseteq \mathcal{C}(F(x))$ for every $x\in\mathcal{M}$ \cite{Forni2015}. Invariant differential positivity refers to differential positivity with respect to a homogeneous or invariant cone field on a homogeneous space $\mathcal{M}=G/H$. Since a homogeneous cone field $\mathcal{C}$ satisfies $\mathcal{C}(g\cdot x)=d\tau_{g}\vert_x\mathcal{C}(x)$, where $\tau_g(x)=g\cdot x$ is the left action of $g\in G$ on $x\in\mathcal{M}$, invariant differential positivity of $F$ reduces to
\begin{equation} \label{inv diff}
\left(dF\big\vert_{x}\circ d\tau_{g_1}\big\vert_o\right)\mathcal{C}(o)\subseteq d\tau_{g_2}\big\vert_o\mathcal{C}(o),
\end{equation}
for any $g_1\in\pi^{-1}(x)$ and $g_2\in\pi^{-1}(F(x))$, where $\pi:G\rightarrow\mathcal{M}$ denotes the natural projection map. Note that (\ref{inv diff}) is a condition that is formulated in reference to a single cone $\mathcal{C}(o)\subset T_o\mathcal{M}$.  A continuous-time dynamical system with semiflow $\psi=\psi(t,x)$ is differentially positive if the flow map $\psi_t:\mathcal{M}\rightarrow\mathcal{M}$, $\psi_t(x)=\psi(t,x)$ is differentially positive for any choice of $t>0$.
Notions of strict differential positivity and uniform strict differential positivity are defined in a natural way. In particular, uniform strict differential positivity is characterized by a cone contraction measure that is bounded below by some nonzero factor over a uniform time horizon \cite{Mostajeran2017a}.

Invariant differential positivity with respect to a pointed convex cone field can be thought of as a generalization of monotonicity. Recall that a system is monotone if it preserves a partial order. Monotonicity in a vector space with respect to a constant cone field is equivalent to differential positivity with respect to the same cone field. Similarly, if a homogeneous cone field on a homogeneous space defines a global partial order, then a map on this space is monotone if and only if it is differentially positive with respect to the same conal structure. To see this, note that a smooth map $F:\mathcal{M}\rightarrow\mathcal{M}$ is monotone with respect to a partial order $\geq$ on $\mathcal{M}$ if $F(x_1)\geq F(x_2)$ whenever $x_1\geq x_2$. Let $\geq_{\mathcal{K}}$ denote the partial order induced by a homogeneous cone field $\mathcal{K}$ on $\mathcal{M}$. If $x_1\geq_{\mathcal{K}}x_2$, then there exists a conal curve 
$\gamma:[0,1]\to \mathcal{M}$ such that $\gamma(0)=x_ 1$, $\gamma(1)=x_2$ and $\gamma'(t)\in \mathcal{K}(\gamma(t))$ for all $t\in (0,1)$. Now $F\circ\gamma: [0,1]\to \mathcal{M}$ is a curve in $\mathcal{M}$ with $(F\circ\gamma) (0)=F(x_1)$, $(F\circ\gamma)(1)=F(x_2)$, and
$
(F\circ\gamma)'(t)=dF\vert_{\gamma(t)}\gamma'(t)$.
Thus, $F\circ\gamma$ is a conal curve joining $F(x_1)$ to $F(x_2)$ if and only if 
$dF\vert_{\gamma(t)}\mathcal{K}(\gamma(t))\subseteq\mathcal{K}(F(\gamma(t))$; i.e., $F$ is differentially positive with respect to $\mathcal{K}$ as expected. 
If an invariant cone field on a homogeneous space does not induce a partial order due to a failure of the antisymmetry condition arising from topological constraints, then monotonicity is not defined as it relies on the existence of a partial order. Nonetheless, invariant differential positivity provides the natural extension of the concept of monotonicity in this setting. 

\subsubsection{Example: Order-preserving maps on $S^+_n$}

Invariant differential positivity can be a powerful tool for establishing monotonicity when the cone field characterization of a partial order is available. It can also be a particularly effective tool in proving monotonicity with respect to a family of partial orders corresponding to a collection of causal structures.
A fundamental result in operator theory is the L\"owner-Heinz theorem \cite{Bhatia, Lowner1934}, which states that the map
$\Sigma\mapsto \Sigma^r$ on $S^+_n$ $(n\geq 2)$ is monotone with respect to the L\"owner order if and only if $r\in[0,1]$. The following theorem provides an extension of this important result to an infinite collection of affine-invariant causal structures on $S^+_n$, thereby highlighting the intimate connection of the L\"owner-Heinz theorem to the affine-invariant geometry of $S^+_n$. The proof is based on differential positivity and can be found in \cite{Mostajeran2018}. 

\begin{thm} \label{lowner general}
Let $\geq_{\mathcal{K}}$ denote the partial order induced by any quadratic affine-invariant cone field $\mathcal{K}$ on $S^+_n$. If $\Sigma_1\geq_{\mathcal{K}} \Sigma_2$ in $S^+_n$ and $r\in[0,1]$, then 
\begin{equation}
\Sigma_1^r\geq_{\mathcal{K}}\Sigma_2^r.
\end{equation}
Furthermore, if $n\geq 2$ and $r>1$, then the map $\Sigma\mapsto\Sigma^r$ is not monotone with respect to $\geq_{\mathcal{K}}$.
\end{thm}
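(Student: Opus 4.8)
\noindent\emph{Proof strategy.} The plan is to deduce both assertions from the differential characterisation of monotonicity. Since every quadratic affine-invariant cone field $\mathcal{K}$ on $S^+_n$ induces a \emph{global} partial order $\geq_{\mathcal{K}}$, the map $F_r\colon\Sigma\mapsto\Sigma^r$ is monotone with respect to $\geq_{\mathcal{K}}$ if and only if it is differentially positive with respect to $\mathcal{K}$, that is, $dF_r\vert_{\Sigma}\,\mathcal{K}(\Sigma)\subseteq\mathcal{K}(\Sigma^r)$ for every $\Sigma\in S^+_n$. So the entire theorem amounts to deciding for which $r$ this inclusion holds, and I would show that it holds precisely when $r\in[0,1]$ and fails for $r>1$ when $n\geq 2$. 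It is convenient to record that by Theorem~\ref{Causal semigroup theorem 2}, $\Sigma_2\leq_{\mathcal{K}}\Sigma_1$ if and only if $\log(\Sigma_2^{-1/2}\Sigma_1\Sigma_2^{-1/2})\in\mathcal{K}(I)$.

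For the computation I would first use the $\Ad_{O(n)}$-invariance of $\mathcal{K}(I)$ (its spectral characterisation) to reduce to $\Sigma=\mathrm{diag}(\sigma_1,\dots,\sigma_n)$, and invoke the classical eigenbasis formula for the Fr\'echet derivative of the matrix power: for a symmetric direction $X=(x_{ij})$, the image $Y:=dF_r\vert_{\Sigma}(X)$ has entries $Y_{ij}=\frac{\sigma_i^r-\sigma_j^r}{\sigma_i-\sigma_j}\,x_{ij}$ for $i\neq j$ and $Y_{ii}=r\sigma_i^{r-1}x_{ii}$. Recall that $X\in\mathcal{K}(\Sigma)$ is expressed by $\tr(\Sigma^{-1}X)\geq 0$ and $(\tr(\Sigma^{-1}X))^2\geq\mu\,\tr(\Sigma^{-1}X\Sigma^{-1}X)$, and $Y\in\mathcal{K}(\Sigma^r)$ by the same inequalities with $\Sigma$ replaced by $\Sigma^r$. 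Two identities then carry the argument. Differentiating $\tr\log(\Sigma_t^r)=r\log\det\Sigma_t$ along $\Sigma_t=\Sigma+tX$ gives
\begin{equation}
\tr(\Sigma^{-r}Y)=r\,\tr(\Sigma^{-1}X),
\end{equation}
so the linear inequality is preserved for $r\geq 0$ and $(\tr(\Sigma^{-r}Y))^2=r^2(\tr(\Sigma^{-1}X))^2$, while a direct diagonal computation gives
\begin{equation}
\tr(\Sigma^{-r}Y\Sigma^{-r}Y)=r^2\sum_{i}\frac{x_{ii}^2}{\sigma_i^2}+\sum_{i\neq j}\left(\frac{\sigma_i^r-\sigma_j^r}{\sigma_i-\sigma_j}\right)^2\frac{x_{ij}^2}{\sigma_i^r\sigma_j^r},
\end{equation}
which is to be compared with $\tr(\Sigma^{-1}X\Sigma^{-1}X)=\sum_{i,j}x_{ij}^2/(\sigma_i\sigma_j)$.

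The crux is the comparison of the two off-diagonal kernels. Setting $s:=\tfrac12\log(\sigma_i/\sigma_j)$, an elementary manipulation gives $\left(\frac{\sigma_i^r-\sigma_j^r}{\sigma_i-\sigma_j}\right)^2\frac{1}{\sigma_i^r\sigma_j^r}=\frac{1}{\sigma_i\sigma_j}\,\frac{\sinh^2(rs)}{\sinh^2 s}$, so the whole problem collapses to the scalar inequality $\sinh(rs)\leq r\sinh(s)$ for $s\geq 0$ — which is exactly convexity of $\sinh$ on $[0,\infty)$ applied to $rs=r\cdot s+(1-r)\cdot 0$, and so holds precisely for $r\in[0,1]$. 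For such $r$ it yields $\tr(\Sigma^{-r}Y\Sigma^{-r}Y)\leq r^2\,\tr(\Sigma^{-1}X\Sigma^{-1}X)$, hence $(\tr(\Sigma^{-r}Y))^2=r^2(\tr(\Sigma^{-1}X))^2\geq r^2\mu\,\tr(\Sigma^{-1}X\Sigma^{-1}X)\geq\mu\,\tr(\Sigma^{-r}Y\Sigma^{-r}Y)$, so $Y\in\mathcal{K}(\Sigma^r)$: $F_r$ is differentially positive, hence monotone. For $r>1$ the same inequality reverses strictly whenever $s\neq 0$, and this is what I would exploit for the second claim: take $n\geq 2$, $\Sigma=\mathrm{diag}(\sigma_1,\sigma_2,1,\dots,1)$ with $\sigma_1\neq\sigma_2$, and the boundary vector $X\in\partial\mathcal{K}(\Sigma)$ for which $\Sigma^{-1/2}X\Sigma^{-1/2}$ equals the identity plus an entry $\tau$ in the $(1,2)$ and $(2,1)$ slots with $\tau^2=n(n-\mu)/(2\mu)>0$ (this places $X$ exactly on the boundary, using $0<\mu<n$, and forces $X_{12}\neq 0$). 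Running the two identities, membership $dF_r\vert_{\Sigma}(X)\in\mathcal{K}(\Sigma^r)$ would force $r^2\geq\sinh^2(rs)/\sinh^2 s$ with $s=\tfrac12\log(\sigma_1/\sigma_2)\neq 0$, which fails; integrating along the conal geodesic $\epsilon\mapsto\exp_{\Sigma}(\epsilon X)$ then produces the concrete pair $\Sigma_2:=\Sigma\leq_{\mathcal{K}}\Sigma_1:=\exp_{\Sigma}(\epsilon X)$ with $\Sigma_1^r\not\geq_{\mathcal{K}}\Sigma_2^r$ for all small $\epsilon>0$, since the defining quadratic form of $\mathcal{K}(I)$ evaluated at $\log(\Sigma_2^{-r/2}\Sigma_1^r\Sigma_2^{-r/2})$ equals $2\mu\tau^2\epsilon^2\big(r^2-\sinh^2(rs)/\sinh^2 s\big)+O(\epsilon^3)<0$.

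I expect the main obstacle to be bookkeeping rather than conceptual: correctly tracking the off-diagonal divided-difference kernels through the trace expressions, and checking that the boundary vector can be chosen with a nonzero off-diagonal component in the eigenbasis of $\Sigma$ for every admissible $\mu\in(0,n)$ — which is precisely why the diagonal part of the construction is spread over all $n$ coordinates. Once everything is reduced to the single comparison between $\sinh(rs)$ and $r\sinh(s)$, both halves of the theorem, together with the sharp threshold at $r=1$, follow at once from convexity of $\sinh$.
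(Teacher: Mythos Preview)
Your proposal is correct and matches the approach the paper indicates. Note that the paper does not actually prove Theorem~\ref{lowner general}: it simply states that ``the proof is based on differential positivity and can be found in \cite{Mostajeran2018}.'' Your argument follows exactly this route---reducing monotonicity to the differential positivity condition $dF_r\vert_{\Sigma}\mathcal{K}(\Sigma)\subseteq\mathcal{K}(\Sigma^r)$, diagonalising via $\Ad_{O(n)}$-invariance, applying the Daleckii--Krein divided-difference formula for $dF_r$, and collapsing the off-diagonal comparison to the scalar inequality $\sinh(rs)\le r\sinh s$---and your computations (the trace identities, the $\sinh$ reduction, the boundary-vector counterexample with $\tau^2=n(n-\mu)/(2\mu)$, and the $\epsilon^2$-expansion for $r>1$) all check out. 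In effect you have supplied the proof the paper omits, by the method the paper points to.
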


\subsection{Strict positivity}

In linear positivity theory, strict positivity of a system with respect to a cone of rank $k$ implies the existence of a dominant eigenspace of dimension $k$, which is an attractor for the system \cite{Fusco1991}. In the differential theory, the notion of a dominant eigenspace is replaced with that of a forward invariant distribution $\mathcal{D}$ of rank $k$ corresponding to the dominant modes of the linearized system \cite{Mostajeran2017d,Forni2017dominance}. It is this distribution that shapes the asymptotic behavior of the dynamics. In particular, if $\mathcal{D}$ is involutive in the sense that for any pair of smooth vector fields $X,Y$ defined near $x\in\mathcal{M}$, $X(x), Y(x)\in \mathcal{D}_x\subset T_x\mathcal{M}\Rightarrow [X,Y](x)\in\mathcal{D}_x$, then an integral manifold of $\mathcal{D}$ is an attractor of the system under suitable technical conditions.

\begin{thm} \label{rank thm}
Let $\Sigma$ be a uniformly strictly differentially positive system with respect to an invariant cone field $\mathcal{C}$ of rank $k$ on a homogeneous Riemannian manifold $\mathcal{M}$ in a bounded, connected and forward-invariant region $S\subseteq \mathcal{M}$. If the forward-invariant distribution of rank $k$ corresponding to the $k$ dominant modes of linearizations 
of $\Sigma$ is involutive and satisfies 
\begin{equation} \label{technical 1}
\limsup_{t\rightarrow\infty}\|d\psi_t\vert_x w\|_{\psi_t(x)}<\infty,
\end{equation}
for all $w\in\mathcal{D}_x$, then there exists a unique integral manifold of $\mathcal{D}$ that is an attractor for all the trajectories from $S$.
\end{thm}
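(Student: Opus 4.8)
### Proof proposal for Theorem \ref{rank thm}

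The plan is to combine the uniform strict differential positivity hypothesis with a Perron--Frobenius-type argument in the differential (tangent-bundle) setting, and then to promote the resulting rank-$k$ invariant distribution to an actual submanifold attractor via the Frobenius integrability theorem. First I would recall the structure of uniform strict differential positivity: the flow $\psi_t$ contracts the rank-$k$ cone field $\mathcal{C}$ uniformly, so that along any trajectory the linearized flow $d\psi_t$ maps $\mathcal{C}(x)$ strictly into $\mathrm{int}\,\mathcal{C}(\psi_t(x))$ with a contraction rate bounded below over a uniform time horizon. By the cone-of-rank-$k$ analogue of the Birkhoff contraction argument (as in \cite{Fusco1991,Mostajeran2017d,Forni2017dominance}), this forces the existence along each trajectory of a measurable, then (by uniformity and boundedness of $S$) continuous, rank-$k$ distribution $\mathcal{D}$ that is forward-invariant under $d\psi_t$ and captures the $k$ dominant Lyapunov directions; vectors in $\mathcal{D}_x$ are exactly those whose forward images stay "deep inside" the cone, while the complementary modes are exponentially dominated. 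The uniform strictness is what makes $\mathcal{D}$ continuous and forward-invariant rather than merely an Oseledets-type object defined almost everywhere.

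Next I would use the hypothesis that $\mathcal{D}$ is involutive. By the Frobenius theorem, $\mathcal{D}$ is then integrable: through each point of the bounded connected region $S$ there passes a maximal integral manifold of $\mathcal{D}$ of dimension $k$, and these foliate $S$. Forward-invariance of the distribution under the flow translates into the statement that $\psi_t$ permutes the leaves of this foliation (it carries integral manifolds to integral manifolds, since $d\psi_t \mathcal{D} = \mathcal{D}$). The growth bound (\ref{technical 1}), namely $\limsup_{t\to\infty}\|d\psi_t\vert_x w\|_{\psi_t(x)}<\infty$ for $w\in\mathcal{D}_x$, says precisely that the dynamics restricted to (a neighborhood of) a leaf is non-expansive in the dominant directions; combined with the strict contraction of the transverse/subdominant directions, this squeezes all trajectories from $S$ toward a single leaf. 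Concretely, I would argue that the distance between $\psi_t(x)$ and the leaf through any fixed reference trajectory decays — the transverse component is killed by uniform strict positivity, and (\ref{technical 1}) prevents the tangential component from blowing the estimate up — so the $\omega$-limit set of every point in $S$ lies in one common integral manifold. That integral manifold, being forward-invariant, compact (it sits inside the bounded forward-invariant $S$, and one takes its closure), and attracting, is the desired attractor; uniqueness follows because any two candidate attractors would have to attract each other and hence coincide.

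The main obstacle I expect is the second step's quantitative estimate: turning "transverse contraction plus tangential boundedness" into genuine convergence to a \emph{single} leaf. One has to rule out the trajectory drifting indefinitely \emph{along} the foliation (the tangential directions are only controlled in norm by (\ref{technical 1}), not shown to be contracting), and one must control how the Riemannian metric on $\mathcal{M}$ interacts with the (a priori only continuous) foliation — e.g. ensuring the leaves do not accumulate pathologically and that a tubular-neighborhood / holonomy argument is available. This is why the hypotheses are stated with boundedness and forward-invariance of $S$ and the explicit limsup condition: they are exactly what is needed to convert the pointwise cone contraction into a Lyapunov-type functional (distance to the limiting leaf) that is eventually decreasing. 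Handling the regularity of $\mathcal{D}$ — upgrading continuity to enough smoothness to apply Frobenius rigorously, or invoking a Lipschitz/topological version of integrability — is a secondary technical point that I would address by appealing to the smoothing inherent in uniform strict contraction of invariant cone fields. The bulk of the remaining work is routine once these estimates are in place, and the detailed argument is carried out in \cite{Mostajeran2017d}.
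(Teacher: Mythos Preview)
Your proposal is correct and follows essentially the same architecture as the paper's proof: obtain the dominant rank-$k$ distribution $\mathcal{D}$ from uniform strict positivity (the paper cites \cite{Newhouse2004,Forni2017dominance} for the splitting $T_x\mathcal{M}=\mathcal{D}_x\oplus\mathcal{D}'_x$), use the $\limsup$ bound to convert dominance into actual decay of the transverse part, and deduce convergence to and uniqueness of an integral manifold.

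The one execution difference worth noting is in the convergence step. You propose to measure distance from $\psi_t(x)$ to a fixed reference leaf and show it decays, which is why you worry about holonomy, tubular neighborhoods, and tangential drift. The paper avoids all of that by working purely at the tangent level: it introduces the ratio $\Phi_x(\delta x)=\|\Pi_{\mathcal{D}'_x}\delta x\|/\|\Pi_{\mathcal{D}_x}\delta x\|$, shows it tends to zero along the variational flow for $\delta x\in\mathcal{C}(x)$, and uses (\ref{technical 1}) to upgrade this to $\Pi_{\mathcal{D}'_{\psi_t(x)}}(d\psi_t|_x\delta x)\to 0$; for $\delta x\notin\mathcal{C}(x)$ it adds a multiple of some $w\in\mathcal{D}_x$ to push the vector into the cone and repeats. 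Having shown that \emph{every} tangent vector aligns with $\mathcal{D}$ asymptotically, the paper then takes an arbitrary smooth curve $\Gamma$ in $S$ and concludes that $\psi_t\circ\Gamma$ asymptotically lies in a single integral manifold; uniqueness follows exactly as you say, by connecting two putative attractors with a curve. This tangent-level/curve argument is lighter than your leaf-distance argument and sidesteps the regularity and drift issues you flagged, though the paper is admittedly brisk about the passage from ``tangent vectors align'' to ``image curve lies on a leaf.''
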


\begin{proof}
The strict differential positivity of $\Sigma$ with respect to $\mathcal{C}$ on a bounded forward invariant region determines a splitting $T_x\mathcal{M}=\mathcal{D}_x\oplus\mathcal{D}'_x$, where $\mathcal{D}_x$ and $\mathcal{D}'_x$ are distributions of rank $k$ and $n-k$, respectively, and $\mathcal{D}_x$ is forward-invariant:
\begin{equation}
d\psi_t\big\vert_x\mathcal{D}_x\subseteq\mathcal{D}_{\psi_t(x)}, \quad \forall x \in \mathcal{M}, \; \forall t>0,
\end{equation}
and corresponds to the $k$ dominant modes of the linearized system by arguments that can be found in the proofs of Theorem 1.2 of \cite{Newhouse2004} and Theorem 1 of \cite{Forni2017dominance}. For each $x\in \mathcal{M}$, define the map $\Phi_x:\mathcal{C}(x)\setminus \{0_x\}\rightarrow\mathbb{R}^{\geq 0}$ by
\begin{equation}
\Phi_x(\delta x) =\frac{\|\Pi_{\mathcal{D}'_x}\delta x\|_x}{\|\Pi_{\mathcal{D}_x}\delta x\|_x},
\end{equation}
where $\Pi_{\mathcal{D}_x}$, $\Pi_{\mathcal{D}'_x}$ denote the linear projections onto the subspaces $\mathcal{D}_x$ and $\mathcal{D}'_x$. The map $\Phi(x)$ is clearly well-defined since $\Pi_{\mathcal{D}_x}\delta x\neq 0_x$ for any $\delta x\in \mathcal{C}(x)\setminus \{0_x\}$. Strict differential positivity ensures that 
$
\lim_{t\rightarrow\infty}\Phi_{\psi_t(x)}\left(d\psi_t\big\vert_x\delta x\right)=0$
for all $x\in \mathcal{M}$ and $\delta x \in \mathcal{C}(x)\setminus \{0_x\}$.

Now if $w \in \mathcal{D}_x$, (\ref{technical 1}) guarantees that
$\lim_{t\rightarrow\infty}\Phi_{\psi_t(x)}\left(d\psi_t\big\vert_x\delta x\right)=0$ implies that $\lim_{t\rightarrow \infty}\Pi_{\mathcal{D}'_{\psi_t(x)}}(d\psi_t\vert_x\delta x)=0$. 
If $\delta x\notin \mathcal{C}(x)$, then for some $\alpha >0$ and $w\in\mathcal{D}_x$, we have $\delta x + \alpha w\in \mathcal{C}(x)\setminus\{0_x\}$ and $\lim_{t\rightarrow\infty}\Phi_{\psi_t(x)}\left(\delta x+\alpha w\right)=0$, which implies that $\lim_{t\rightarrow \infty}\Pi_{\mathcal{D}'_{\psi_t(x)}}(d\psi_t\vert_x\delta x)=0$ once again. Thus, in the limit of $t\rightarrow \infty$, $\delta x(t) = d\psi_t\vert_x\delta x$ becomes parallel to $\mathcal{D}_{\psi_t(x)}$. It follows that any curve $\Gamma=\Gamma(s)$ in $S$ evolves so that $\psi_t(\Gamma(s))$ asymptotically lies on an integral manifold of $\mathcal{D}$.
To prove uniqueness of the attractor, assume for contradiction that $\mathcal{N}_1$ and $\mathcal{N}_2$ are two distinct attractive integral manifolds of $\mathcal{D}$ and let $x_1\in \mathcal{N}_1$, $x_2\in\mathcal{N}_2$. By connectedness of $S$, there exists a smooth curve $\Gamma$ in $S$ connecting $x_1$ and $x_2$. Since the curve $\psi_t(\Gamma(s))$ converges to an integral manifold of $\mathcal{D}$, $\mathcal{N}_1$ and $\mathcal{N}_2$ must be subsets of the same integral manifold of $\mathcal{D}$, which provides the contradiction that completes the proof.
\qed
\end{proof}

Note that condition (\ref{technical 1}) is necessary to ensure that vectors along the distribution $\mathcal{D}$ do not grow unbounded as they evolve by the variational flow, thereby ensuring that strict differential positivity results in contraction toward an integral manifold of $\mathcal{D}$.

As noted earlier, invariant differential positivity is a generalization of monotonicity to homogeneous spaces.
This generalization is made possible by the nonlinearity of the homogeneous space and allows for more complex asymptotic behavior to arise. For instance, as shown in the work of M. Hirsch \cite{Hirsch1988,Smith1995}, almost all bounded trajectories of a strongly monotone (strictly differentially positive) system converge to the set of equilibria. Moreover, under mild smoothness and boundedness assumptions, almost every trajectory converges to one equilibrium. On the other hand, for systems that are strictly differentially positive with respect to pointed convex invariant cone fields on homogeneous spaces, almost all bounded trajectories may converge to a limit cycle under similarly mild technical assumptions \cite{Forni2015}. For example, invariant differential positivity on the cylinder has been used to establish convergence to a unique limit cycle in a nonlinear pendulum model \cite{Forni2014a}. Such a one-dimensional asymptotic behavior is possible for an invariantly differentially positive system due to the topology of the cylinder, which allows for the existence of closed conal curves.

\subsection{Coset stabilization on Lie groups}

In many problems in dynamical systems and control theory, we are interested in asymptotic convergence of trajectories to a submanifold of the state space. Such problems arise in numerous applications including consensus, synchronization, pattern generation, and path following. Here we consider a special class of such systems which are defined on a Lie group $G$ and converge to submanifolds which arise as integrals of left-invariant distributions on $G$. A left-invariant distribution $\mathcal{D}$ on $G$ is a distribution that satisfies  $\mathcal{D}_{g_1g_2}=dL_{g_1}\vert_{g_2}\mathcal{D}_{g_2}$ for all $g_1,g_2\in G$. Any such distribution uniquely determines a subspace $\mathcal{D}_e$ of $T_eG\cong\mathfrak{g}$ and conversely every subspace of $T_eG$ defines a unique left-invariant distribution. Furthermore, the left-invariant distribution defined by a subspace $U$ of the Lie algebra $\mathfrak{g}$ is integrable if and only if $U$ is a subalgebra $\mathfrak{h}$ of $\mathfrak{g}$. Given such a left-invariant distribution $\mathcal{D}$, its integral through the identity element $e\in G$ is a subgroup $H$ of $G$ with Lie algebra $\mathfrak{h}$. The integral of $\mathcal{D}$ through any other point $g\in G$ corresponds to a translation of the subgroup $H$ on $G$ and can be identified with the left coset $gH$. 

In many applications, we are interested in stabilizing a submanifold corresponding to a coset $gH$ in $G$. For instance, in satellite surveillance the attitude of the satellite is an element of the special orthogonal group $SO(3)$ and we often seek to control the orientation of the satellite by ensuring that the telescope axis points to a fixed point on the Earth's surface. Since the set of attitudes which solve this problem can be identified with an $SO(2)$ subgroup corresponding to rotations about the telescope axis, the problem is essentially a simple coset stabilization problem \cite{Montenbruck2016}. Another large class of coset stabilization problems involves consensus models involving $N$ agents $g_k$ whose states evolve on a Lie group $G$ and exchange information about their relative positions via a communication graph. The consensus manifolds in such problems take the form of a single copy of $G$ repeated $N$ times and diagonally embedded in the Cartesian product of $G^N$, which is the $(N\dim G)$-dimensional state space of the system. Such consensus manifolds correspond to fixed formations of the $N$ agents evolving uniformly on $G$ and correspond to $(\dim G)$-dimensional cosets in $G^N$. The synchronization manifold on which all $N$ agents have the same state is a special case of such a coset in $G^N$.
A simple example of such a model is a network of $N$ oscillators evolving on the $N$-torus, where the cosets are one-dimensional and correspond to frequency synchronization and phase-locking behaviors. 

Now consider a homogeneous space $\mathcal{M}=G/H$, where $G$ is a semisimple and compact Lie group so that the negative of the Killing form $-B$  of $\mathfrak{g}$ is positive definite. Then $\mathcal{M}$ is reductive 
with reductive decomposition $\mathfrak{g}=\mathfrak{h}\oplus\mathfrak{m}$, where $\mathfrak{m}=\mathfrak{h}^{\perp}$. The negative of the Killing form $-B$ induces a bi-invariant metric on $G$ and a homogeneous metric on $\mathcal{M}$. The tangent space at each point $g\in G$ admits the decomposition $T_gG=\mathcal{H}_g\oplus \mathcal{H}_g^{\perp}$, where $\mathcal{H}=dL_g\vert_e\mathfrak{h}$ and $\mathcal{H}^{\perp}=dL_g\vert_e\mathfrak{m}$.
We choose a basis for $\mathfrak{g}$ of the form
$\{e_1,...,e_{\dim\mathfrak{h}},e_{\dim\mathfrak{h}+1},...,e_{\dim\mathfrak{g}}\}$,
where $\{e_i: i=1,...,\dim \mathfrak{h}\}$ is a basis of $\mathfrak{h}$. Define a cone $\mathcal{C}(e)$ of rank $k=\dim\mathfrak{h}$ in $\mathfrak{g}$ by
\begin{equation}  \label{cone lie}
\mathcal{C}(e)=\bigg\{v=\sum_{i=1}^{\dim\mathfrak{g}}v_ie_i\in\mathfrak{g}:
Q(v):=\sum_{i=1}^{\dim\mathfrak{h}}v_i^2-\mu\sum_{i=\dim\mathfrak{h}+1}^{\dim\mathfrak{g}}v_i^2\geq 0
\bigg\},
\end{equation}
where $\mu>0$ is a sufficiently small parameter to ensure that (\ref{cone lie}) is non-empty. $\mathcal{C}(e)$ can be uniquely extended to a left-invariant cone field $\mathcal{C}(g)=dL_g\vert_e\mathcal{C}(e)$ of rank $\dim\mathfrak{h}$ on $G$.

Note that a dynamical system $\dot{g}=\bar{f}(g)$ on $G$ induces a well-defined projected flow on $\mathcal{M}$ generated by $\dot{x}=f(x)$ if 
\begin{equation}  \label{horizontal}
\bar{f}_{\mathcal{H}^{\perp}}(g_1)=\bar{f}_{\mathcal{H}^{\perp}}(g_2)
\end{equation}
for all $g_1\sim g_2$, where $\sim$ denotes the equivalence relation induced by the coset manifold structure $G/H$; i.e., $g_1\sim g_2$ if and only if there exists $h\in H$ such that $g_1=g_2h$. $\bar{f}_{\mathcal{H}^{\perp}}(g)$ in (\ref{horizontal}) refers to the component of $\bar{f}$ in $\mathcal{H}^{\perp}_g\subset T_gG$. The following theorem shows how strict differential positivity with respect to the left-invariant cone field $\mathcal{C}$ in $G$ can induce a contractive dynamical system on $\mathcal{M}$ under suitable technical conditions. See figures \ref{cycle} and \ref{invariant} for illustrations of the relevant concepts.

\begin{thm} \label{proj thm}
Let $\Sigma: \dot{g}=\bar{f}(g)$ be a system on $G$ that induces a well-defined projective dynamics $\dot{x}=f(x)$ on $\mathcal{M}=G/H$. Suppose that $\Sigma$ is uniformly strictly differentially positive with respect to the left-invariant cone field $\mathcal{C}$ determined by (\ref{cone lie}) on a bounded, connected and forward invariant region $S\subset G$, with a distribution of dominant eigenspaces of the form $\mathcal{D}_g=dL_g\vert_e\mathfrak{h}$. If $\limsup_{t\rightarrow\infty}\|d\psi_t\vert_g w\|_{\psi_t(g)}<\infty$,
for all $w\in\mathcal{D}_g$, then there exists a unique coset $gH$ in $S$ that is an attractor for all trajectories from $S$. Furthermore, the induced  system on $\mathcal{M}$ is contractive with respect to any homogeneous metric and all trajectories from $\pi(S)\subset\mathcal{M}$ converge to the fixed point $x=\pi(gH)\in\pi(S)$.
\end{thm}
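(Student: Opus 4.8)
The plan is to combine the general attractor result of Theorem \ref{rank thm} (applied on $G$ with the left-invariant rank-$k$ cone field $\mathcal{C}$ of \eqref{cone lie}) with the special structure of $\mathcal{D}_g = dL_g\vert_e\mathfrak{h}$ to identify the unique integral manifold as a coset, and then to push the dynamics down to $\mathcal{M}$ and verify contractivity in the homogeneous metric. First I would check that the hypotheses of Theorem \ref{rank thm} are met: $\Sigma$ is uniformly strictly differentially positive with respect to $\mathcal{C}$ on the bounded, connected, forward-invariant region $S$, and the limsup condition \eqref{technical 1} holds along $\mathcal{D}$ by assumption. The crucial extra input is that the dominant rank-$k$ distribution is exactly $\mathcal{D}_g = dL_g\vert_e\mathfrak{h}$; since $\mathfrak{h}$ is a subalgebra, $\mathcal{D}$ is involutive (as spelled out in the subsection on coset stabilization: the left-invariant distribution generated by a subspace $U\subseteq\mathfrak{g}$ is integrable iff $U$ is a subalgebra), so Theorem \ref{rank thm} applies verbatim and yields a unique integral manifold $\mathcal{N}\subseteq S$ of $\mathcal{D}$ that attracts all trajectories from $S$.

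Next I would identify $\mathcal{N}$ as a coset. The integral manifolds of the left-invariant distribution $\mathcal{D}$ generated by $\mathfrak{h}$ are precisely the left cosets $gH$, as noted in the text: the integral through $e$ is the subgroup $H$ with Lie algebra $\mathfrak{h}$, and the integral through an arbitrary $g$ is the translate $gH$. Hence the unique attractor $\mathcal{N}$ produced by Theorem \ref{rank thm} equals $gH$ for some $g$ with $gH\subseteq S$, giving the first assertion.

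It remains to analyze the induced system on $\mathcal{M}=G/H$. Because $\Sigma$ induces a well-defined projected flow $\dot x = f(x)$ (condition \eqref{horizontal}), the projection $\pi:G\to\mathcal{M}$ is equivariant with respect to the two flows: $\pi\circ\psi_t = \psi^{\mathcal{M}}_t\circ\pi$. The coset $gH$ maps to the single point $x^\star := \pi(gH)\in\pi(S)$, and convergence of every trajectory in $S$ to $gH$ forces convergence of every trajectory in $\pi(S)$ to $x^\star$; thus $x^\star$ is a fixed point of the projected dynamics and a global attractor on $\pi(S)$. For contractivity in a homogeneous metric, I would argue that the reductive splitting $\mathfrak{g}=\mathfrak{h}\oplus\mathfrak{m}$ makes $\mathcal{H}^{\perp}_g = dL_g\vert_e\mathfrak{m}$ the horizontal distribution of the Riemannian submersion $\pi$, so that $d\pi$ restricted to $\mathcal{H}^{\perp}$ is a linear isometry onto $T_{\pi(g)}\mathcal{M}$. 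The infinitesimal displacement $\delta x(t)=d\psi_t\vert_x\delta x$ on $\mathcal{M}$ is then the image under $d\pi$ of the horizontal part of the variational flow on $G$; by the attractor argument in the proof of Theorem \ref{rank thm}, variations transverse to $\mathcal{D}$ decay (the $\mathcal{D}'=\mathcal{H}^{\perp}$-component, i.e.\ exactly the part that projects nontrivially to $\mathcal{M}$, goes to zero), so $\|\delta x(t)\|_{\psi^{\mathcal{M}}_t(x)}\to 0$ for every $\delta x\in T_x\mathcal{M}$, which is precisely contractivity of the induced system with respect to any homogeneous metric.

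The main obstacle I anticipate is the last step: rigorously transferring the contraction estimate from $G$ down to $\mathcal{M}$. One must be careful that the variational flow on $\mathcal{M}$ really is the $d\pi$-image of the horizontal component of the variational flow on $G$ — this requires the well-definedness condition \eqref{horizontal} at the linearized level and uses that $\mathcal{D}' = \mathcal{H}^{\perp}$ under the choice of basis adapted to $\mathfrak{g}=\mathfrak{h}\oplus\mathfrak{m}$ — and that the decay of the $\mathcal{H}^{\perp}$-component established in the proof of Theorem \ref{rank thm} is uniform enough to yield genuine metric contraction rather than mere asymptotic alignment. Verifying that the limsup hypothesis along $\mathcal{D}=\mathcal{H}$ is compatible with, and does not interfere with, decay along $\mathcal{H}^{\perp}$ is the delicate point; everything else is bookkeeping with the reductive structure and the equivariance of $\pi$.
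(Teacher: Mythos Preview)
The paper does not actually supply a proof of Theorem~\ref{proj thm}; the statement is followed only by illustrative figures and the consensus example. Your proposal---invoking Theorem~\ref{rank thm} on $G$, using that $\mathfrak{h}$ is a subalgebra so the left-invariant distribution $\mathcal{D}_g=dL_g\vert_e\mathfrak{h}$ is involutive, identifying its integral manifolds with left cosets, and then pushing everything down through the Riemannian submersion $\pi$---is precisely the argument the surrounding text is set up to support (see the paragraph preceding \eqref{cone lie} and the discussion after \eqref{horizontal}). So your approach is correct and is effectively the intended one.

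Your self-identified obstacle is the only substantive point worth flagging. The proof of Theorem~\ref{rank thm} gives $\lim_{t\to\infty}\Pi_{\mathcal{D}'_{\psi_t(g)}}(d\psi_t\vert_g\delta g)=0$ for every $\delta g$, and under the reductive splitting with $\mathcal{D}'=\mathcal{H}^\perp$ this is exactly decay of the horizontal component; since $d\pi$ restricted to $\mathcal{H}^\perp$ is an isometry onto $T_{\pi(g)}\mathcal{M}$ for any homogeneous metric, you obtain $\|d\psi^{\mathcal{M}}_t\vert_x\delta x\|\to 0$. Whether one calls this ``contractive'' or merely ``incrementally asymptotically stable'' is a matter of terminology that the paper does not resolve, so your caution there is appropriate but not a gap in the mathematics.
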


\begin{figure}
\centering
\includegraphics[width=0.8\linewidth]{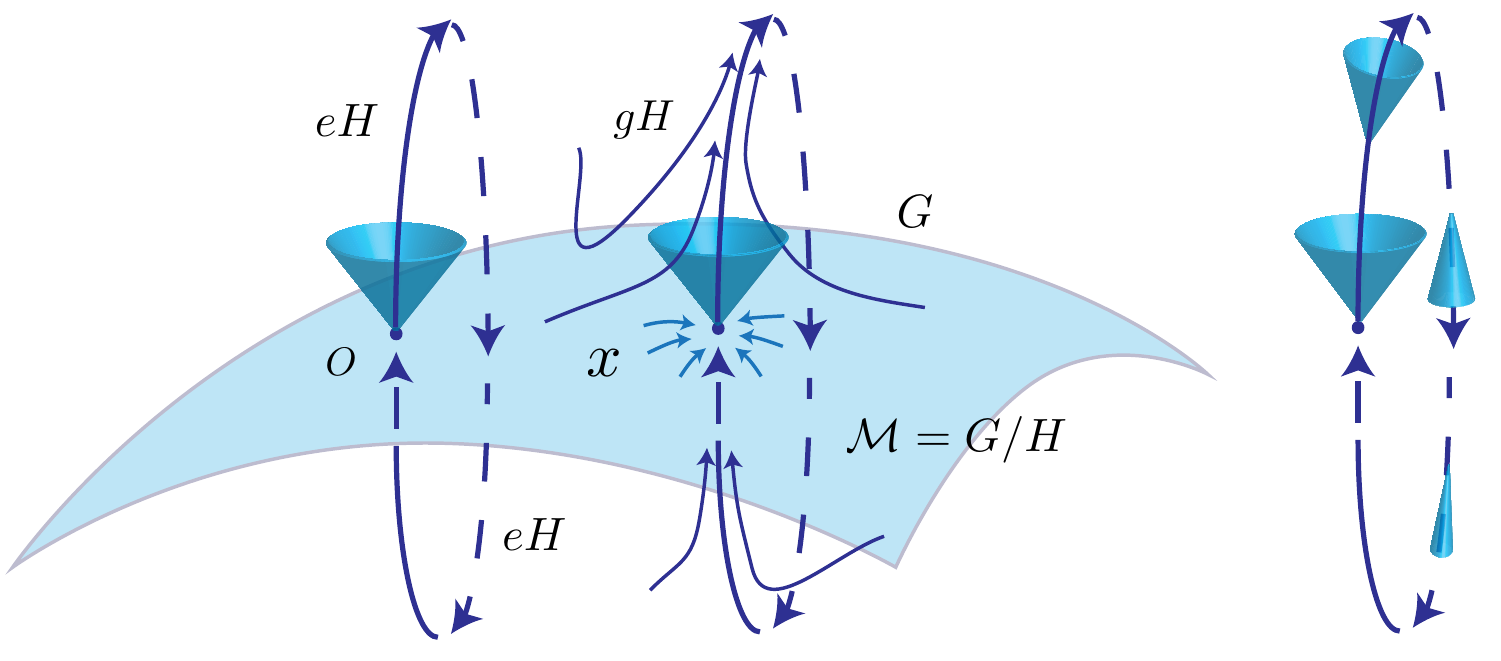}
  \caption{An invariantly strictly differentially positive system on a Lie group $G$ inducing a contractive system on $\mathcal{M}=G/H$. Under suitable technical conditions, trajectories in $G$ converge to a coset $gH$ in $G$ corresponding to a point $x\in\mathcal{M}$. In this figure the cosets are depicted as circles $\mathbb{S}^1$.
  }
  \label{cycle}
\end{figure}

\begin{figure}
\centering
\includegraphics[width=0.9\linewidth]{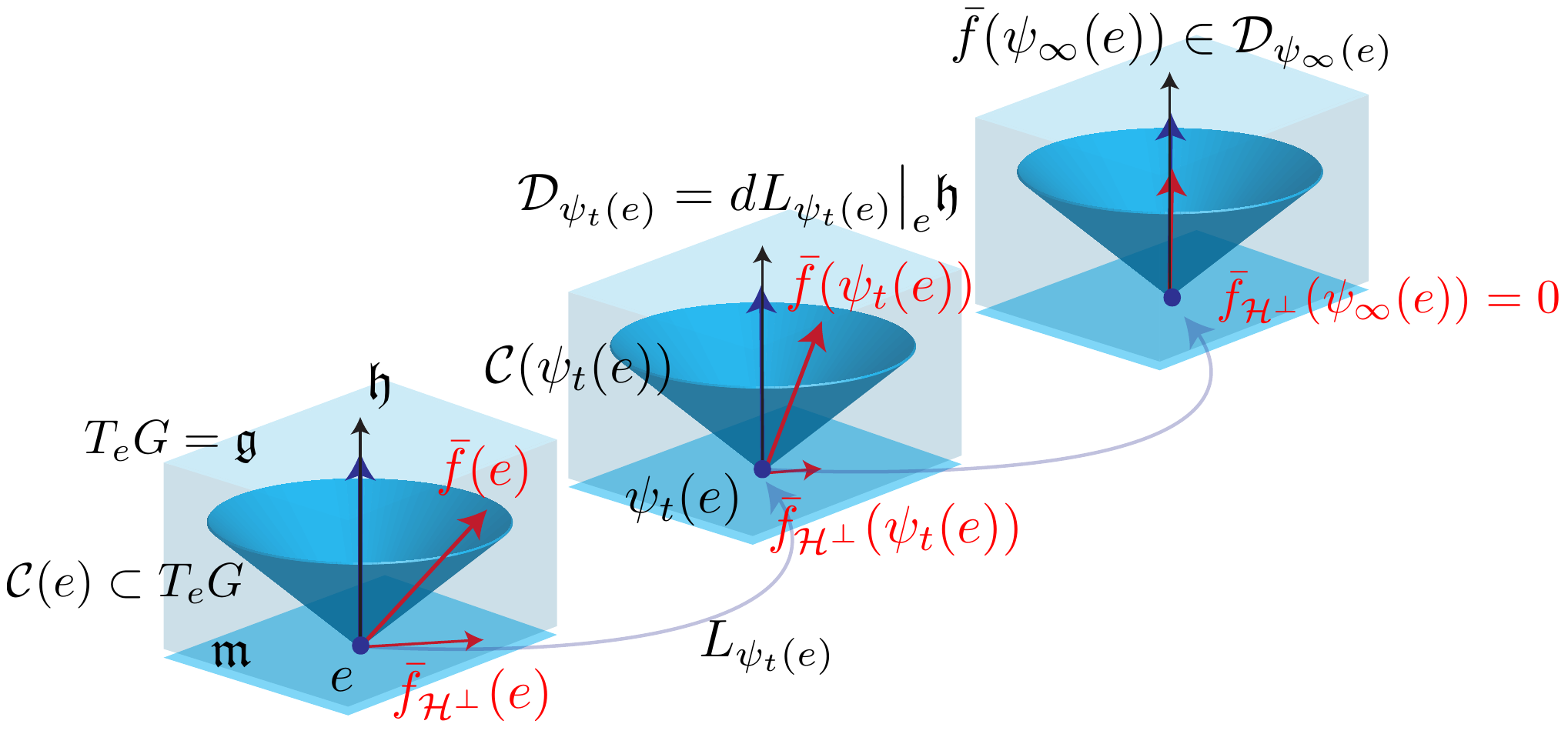}
  \caption{Trajectories of an invariantly strictly differentially positive system asymptotically aligning with a distribution $\mathcal{D}$ of dominant eigenspaces of the linearized system. In this figure the distribution corresponds to the left-invariant distribution generated by a subalgebra $\mathfrak{h}$ of $\mathfrak{g}$.
  }
  \label{invariant}
\end{figure}

\subsubsection{Example: Consensus on $\mathbb{S}^1$}

Let $G$ be a compact Lie group with a bi-invariant Riemannian metric giving rise to a distance function $d:G \times G\rightarrow [0,\infty)$. Given a network of $N$ agents $g_k$ represented by an undirected connected graph $\mathcal{G}=(\mathcal{V},\mathcal{E})$ consisting of a set of vertices $\mathcal{V}$ and edges $\mathcal{E}$ evolving on $G$, we can define a class of consensus protocols on $G$ as follows.  For each $g_k\in G$ denote the Riemannian exponential and logarithm maps by $\exp_{g_k}:T_{g_k}G\rightarrow G$ and $\log_{g_k}:U_{g_k}\rightarrow T_{g_k}G$, respectively, where $U_{g_k}\subset G$ is the maximal set containing $g_k$ for which $\exp_{g_k}$ is a diffeomorphism. The system given by
\begin{equation} \label{consensus protocol}
\dot{g}_k=g_k\cdot\Omega_k+\sum_{i:(k,i)\in\mathcal{E}}\mu_{ki}(d(g_k,g_i))\frac{\log_{g_k}g_i}{\|\log_{g_k}g_i\|},
\end{equation}
defines a consensus protocol on $G$ for constant vectors $\Omega_k\in\mathfrak{g}$ and any collection of real-valued  reshaping functions $\mu_{ki}$ of the distance $d$ that satisfy $\mu_{ki}(0)=0$. Equation (\ref{consensus protocol}) defines a dynamical system on $G\times \cdots \times G = G^N$ that yields a well-defined projected system on $G^N/G$ in the sense of Eq. (\ref{horizontal}) and Theorem \ref{proj thm}, since 
\begin{equation}
\log_{g\cdot g_k}(g\cdot g_i)= g\cdot (\log_{g_k}g_i) \quad \mathrm{and} \quad d(g\cdot g_k,g\cdot g_i)=d(g_k,g_i),
\end{equation}
for all $g,g_k,g_i\in G$ leave Eq. (\ref{consensus protocol}) invariant.

For the sake of simplicity, we will consider the example of a network of $N$ agents evolving on the circle $\mathbb{S}^1$. Equation (\ref{consensus protocol}) reduces to a system of the form
\begin{equation} \label{circle consensus}
\dot{\theta}_k=\omega_k +\sum_{i:(k,i)\in\mathcal{E}}\mu_{ki}(\theta_k-\theta_i),
\end{equation}
where $\theta_k\in\mathbb{S}^1$ represents the phase of agent $k$, $\omega_k\in\mathbb{R}$ are prescribed `intrinsic' frequencies, and $\mu_{ki}$ now denotes an odd coupling function on the domain $(-\pi,\pi)$ extended to $\mathbb{R}$ in such a way so as to make it $2\pi$-periodic. Note that $\mu_{ki}$ and $\mu_{ik}$ need not be the same function. Let $\theta=(\theta_1,\ldots,\theta_N)$ denote an element of the $N$-torus $\mathbb{T}^N$ and consider the $N$-tuple of vector fields $
\left(\frac{\partial}{\partial\theta^1},\ldots,\frac{\partial}{\partial\theta^N}\right)
$, which defines a basis of left-invariant vector fields on $\mathbb{T}^N$.
Assuming that the coupling functions $\mu_{ki}$ are differentiable and strictly monotonically increasing on $(-\pi,\pi)$, then it can be shown that the linearization $\dot{\delta\theta} = A(\theta)\delta\theta$ of the system given by (\ref{circle consensus}) is uniformly strictly differentially positive on the set $\mathbb{T}^N_{\pi}=\{\vartheta\in\mathbb{T}^N:|\vartheta_k-\vartheta_i|<\pi,\;(i,k)\in\mathcal{E}\}$ with respect to the invariant cone field
\begin{equation}
\mathcal{K}_{\mathbb{T}^N}(\theta):=\bigg\{\delta \theta\in T_{\theta}\mathbb{T}^N: \delta \theta^i\geq 0, \, \delta\theta=\sum_i\delta\theta^i\frac{\partial}{\partial\theta^i}\bigg\},
\end{equation}
 for any strongly connected communication graph. Furthermore, the Perron-Frobenius vector field of the system on $\mathbb{T}^N_{\pi}$ is the left-invariant vector field $\boldsymbol{1}(\theta)=(1,\ldots,1)\in T_{\theta}\mathbb{T}^{N}$, where the vector representation is given with respect to the invariant basis defined by $\left(\frac{\partial}{\partial\theta^1},\ldots,\frac{\partial}{\partial\theta^N}\right)$. Moreover, if we denote the flow of (\ref{circle consensus}) by $\psi_t$, then 
 the condition $A(\theta)\boldsymbol{1}(\theta)=0$ implies that $d\psi_t\vert_{\theta}\boldsymbol{1}_{\theta}=\boldsymbol{1}_{\psi_t(\theta)}$, which ensures that $\limsup_{t\rightarrow\infty}\|d\psi_t\vert_{\theta} \boldsymbol{1}(\theta)\|_{\psi_t(\theta)}<\infty$ for any flow confined to $\mathbb{T}^N_{\pi}$, where $\|\cdot\|_{\theta}$ denotes the norm corresponding to the standard Riemannian metric on $\mathbb{T}^N$. If we add the requirement that the coupling functions $\mu_{ki}$ be barrier functions on $(-\pi,\pi)$ so that $\mu_{ki}(\alpha)\rightarrow \infty$ as $\alpha\rightarrow \pi$, then the flow $\psi_t$ will be forward-invariant on $\mathbb{T}^N_{\pi}$, resulting in the following theorem.

\begin{thm}
Consider a network of agents on $\mathbb{S}^1$ communicating via a strongly connected communication graph according to (\ref{circle consensus}). If the coupling functions $\mu_{ki}$ satisfy $\mu_{ki}(0)=0$, $\mu_{ki}(\alpha)\rightarrow \infty$ as $\alpha \rightarrow \pi$, and $\mu_{ki}'(\alpha)>0$ on $(-\pi,\pi)$, then every trajectory from $\mathbb{T}_{\pi}^N$ converges to an integral curve of the vector field $\boldsymbol{1}=\boldsymbol{1}(\theta)$.
\end{thm}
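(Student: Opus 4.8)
\emph{Proof strategy.} The plan is to obtain the statement as a direct application of Theorem~\ref{rank thm}, with $\mathcal{M}=\mathbb{T}^{N}$ carrying its flat bi-invariant metric, the invariant rank-$1$ cone field $\mathcal{C}_{\mathbb{T}^{N}}(\theta):=\mathcal{K}_{\mathbb{T}^{N}}(\theta)\cup\bigl(-\mathcal{K}_{\mathbb{T}^{N}}(\theta)\bigr)$ generated by the orthant $\mathcal{K}_{\mathbb{T}^{N}}$ in the invariant frame $(\partial/\partial\theta^{1},\ldots,\partial/\partial\theta^{N})$ --- this field is $G$-invariant and has rank $1$, since it contains lines through the origin (e.g.\ $\mathbb{R}\,\boldsymbol{1}(\theta)$) but no $2$-plane --- the forward-invariant region $S=\mathbb{T}^{N}_{\pi}$, and the rank-$1$ distribution $\mathcal{D}_{\theta}=\mathrm{span}\{\boldsymbol{1}(\theta)\}$ spanned by the left-invariant vector field $\boldsymbol{1}$ generated by $(1,\ldots,1)\in T_{e}\mathbb{T}^{N}$. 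The maximal integral manifolds of a rank-$1$ distribution are the orbits of a spanning vector field, so the integral manifolds of $\mathcal{D}$ are precisely the integral curves of $\boldsymbol{1}$; thus, once the hypotheses of Theorem~\ref{rank thm} are verified, its conclusion is exactly the assertion.

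\emph{Verifying the hypotheses.} The region $\mathbb{T}^{N}_{\pi}$ is bounded because $\mathbb{T}^{N}$ is compact, and it is forward invariant for the flow $\psi_{t}$ of (\ref{circle consensus}) because the barrier property $\mu_{ki}(\alpha)\to\infty$ as $\alpha\to\pi$ makes each hypersurface $\{\theta_{k}-\theta_{i}=\pm\pi\}$ repelling --- this is the observation recorded immediately before the statement. If $\mathbb{T}^{N}_{\pi}$ is disconnected, the flow preserves each of its finitely many connected components, so one applies the argument on a fixed component $S$ (each of which contains integral curves of $\boldsymbol{1}$, since $\theta_{k}-\theta_{i}$ is constant along any such curve). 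Uniform strict differential positivity of $\psi_{t}$ with respect to $\mathcal{K}_{\mathbb{T}^{N}}$ on $\mathbb{T}^{N}_{\pi}$ is the substantive analytic input established just before the statement: under the hypotheses on the $\mu_{ki}$, the variational Jacobian $A(\theta)$ has, in the invariant frame, the sparsity pattern of the strongly connected graph, and the differential Perron--Frobenius/dominance theory then gives a contraction of $\mathcal{K}_{\mathbb{T}^{N}}$ into its interior over a uniform time horizon; this a fortiori gives strict positivity for the generated rank-$1$ cone $\mathcal{C}_{\mathbb{T}^{N}}$. Since each row of $A(\theta)$ sums to zero, $A(\theta)\boldsymbol{1}(\theta)=0$, so $\boldsymbol{1}(\theta)$ is the everywhere-positive, hence dominant, Perron--Frobenius eigenvector, which identifies the rank-$1$ dominant distribution of Theorem~\ref{rank thm} with $\mathcal{D}_{\theta}$; this distribution has rank $1$ and is therefore automatically involutive. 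Finally, $A(\theta)\boldsymbol{1}(\theta)=0$ forces $d\psi_{t}\vert_{\theta}\boldsymbol{1}(\theta)=\boldsymbol{1}(\psi_{t}(\theta))$, and $\|\boldsymbol{1}(\cdot)\|$ is constant for the flat metric, so $\limsup_{t\to\infty}\|d\psi_{t}\vert_{\theta}w\|_{\psi_{t}(\theta)}<\infty$ for all $w\in\mathcal{D}_{\theta}$, which is condition (\ref{technical 1}).

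\emph{Conclusion.} With all hypotheses in force on each connected component of $\mathbb{T}^{N}_{\pi}$, Theorem~\ref{rank thm} yields a unique integral manifold of $\mathcal{D}$ inside $\mathbb{T}^{N}_{\pi}$ that attracts every trajectory from $\mathbb{T}^{N}_{\pi}$; unwinding the identification $\mathcal{D}=\mathrm{span}\{\boldsymbol{1}\}$, this integral manifold is an integral curve of $\boldsymbol{1}$, i.e.\ a phase-locked solution $t\mapsto\theta_{0}+t(1,\ldots,1)$, which is the claim. (This is, concretely, the torus instance of the coset-stabilization picture of Theorem~\ref{proj thm}, with $H$ the diagonal $\mathbb{S}^{1}$ in $\mathbb{T}^{N}$.)

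\emph{Main obstacle.} The one place where genuine work is needed --- and the step I expect to be hardest --- is the uniform strict differential positivity estimate itself: one must show that $d\psi_{t}\vert_{\theta}$ maps $\mathcal{K}_{\mathbb{T}^{N}}$ strictly into its interior with a cone-contraction rate bounded below by a positive constant over a fixed time horizon, uniformly for $\theta\in\mathbb{T}^{N}_{\pi}$. This is the differential Perron--Frobenius argument for the graph-Laplacian-type Jacobian $A(\theta)$ on the strongly connected graph, and its uniformity is delicate because $\overline{\mathbb{T}^{N}_{\pi}}$ meets the hypersurfaces on which the $\mu_{ki}$ blow up; it is secured by the hypotheses on the $\mu_{ki}$ (for the natural barrier choices the derivatives $\mu_{ki}'$ are bounded below on $(-\pi,\pi)$) together, if needed, with the fact that the dynamics induced on the quotient $\mathbb{T}^{N}/(\text{diagonal }\mathbb{S}^{1})$ is contracting, so that every trajectory is eventually confined to a compact subset of $\mathbb{T}^{N}_{\pi}$. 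All the remaining hypotheses of Theorem~\ref{rank thm} --- boundedness, connectedness (after the component reduction), forward invariance from the barrier property, involutivity of a rank-$1$ distribution, and the growth bound (\ref{technical 1}) --- are immediate.
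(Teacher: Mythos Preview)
Your proposal is correct and follows essentially the same route the paper lays out: the paper does not give a separate formal proof of this theorem but presents it as the outcome of the paragraph immediately preceding the statement, which assembles exactly the ingredients you list --- uniform strict differential positivity of the variational system with respect to $\mathcal{K}_{\mathbb{T}^N}$ on $\mathbb{T}^N_\pi$ from strong connectivity and $\mu_{ki}'>0$, identification of the Perron--Frobenius field as $\boldsymbol{1}$ via $A(\theta)\boldsymbol{1}(\theta)=0$, the bound (\ref{technical 1}) from $d\psi_t\vert_\theta\boldsymbol{1}(\theta)=\boldsymbol{1}(\psi_t(\theta))$, and forward invariance from the barrier condition --- and then invokes Theorem~\ref{rank thm}; your passage to the rank-$1$ cone $\mathcal{K}\cup(-\mathcal{K})$ to fit the literal hypotheses of Theorem~\ref{rank thm}, and your reduction to connected components of $\mathbb{T}^N_\pi$, are both appropriate refinements of what the paper leaves implicit.
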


Note that convergence to an integral curve of $\boldsymbol{1}$ on $\mathbb{T}^N$ corresponds to a phase-locking behavior, whereby the collective motion asymptotically converges to movement in a fixed formation with frequency synchronization among the agents. Further details may be found in \cite{Mostajeran2017a}.

\section{Conclusion}

We have reviewed the notion of invariant cone fields on homogeneous spaces and presented examples of cone fields on a list of homogeneous spaces that are of special interest in applications in information science. Invariant differential positivity naturally arises as a generalization of monotonicity on homogeneous spaces within this context. Finally, we have illustrated the potential applications of monotone flows on homogeneous spaces in systems and control theory by reviewing the consensus problem on $\mathbb{S}^1$ and discussing possible extensions of the approach to higher-dimensional spaces.

\begin{acknowledgements}
The authors are grateful to Dr. Fulvio Forni for numerous stimulating conversations and insightful comments on this work.
\end{acknowledgements}


\bibliographystyle{spbasic}      

\bibliography{references}

\end{document}